\theoremstyle{plain}
\newtheorem{thm}{Theorem}[section]
\newtheorem{Theorem}[thm]{Theorem}
\newtheorem{theorem}[thm]{Theorem}
\newtheorem{lemma}[thm]{Lemma}
\newtheorem*{proper*}{Property}
\newtheorem{cor}[thm]{Corollary}
\newtheorem{corollary}[thm]{Corollary}
\newtheorem{conjecture}[thm]{Conjecture}
\newtheorem*{lm*}{Lemma}
\newtheorem*{thm*}{Theorem}
\newtheorem*{cor*}{Corollary}
\theoremstyle{definition}
\newtheorem{definition}[thm]{Definition}
\newtheorem*{df*}{Definition}
\newtheorem{ex-notn}[thm]{Example/Notation}
\newtheorem{construction}[thm]{Construction}
\newtheorem{observation}[thm]{Observation}
\theoremstyle{remark}
\newtheorem{remark}[thm]{Remark}
\newtheorem*{acknowledgement*}{Acknowledgement}
\newtheorem*{ex*}{Example}
\newtheorem*{exer*}{Exercise}
\newtheorem*{rem*}{Remark}
\newtheorem*{prob*}{Problem}
\newtheorem*{prop*}{Proposition}
\newtheoremstyle{mystyle}{5pt}{5pt}%
{\itshape}%         Body font
{}%         Indent amount (empty = no indent, \parindent = para indent)
{\bfseries}% Thm head font
{.}%        Punctuation after thm head
{.5em}%     Space after thm head (\newline = linebreak)
{\thmname{#1}\thmnumber{ #2}\thmnote{ #3}}%         Thm head spec
\theoremstyle{mystyle}
\newtheorem*{thma}{Theorem}
\def\Ass{\operatorname{Ass}}
\def\boxicity{\operatorname{box}}
\def\cochord{\operatorname{cochord}} % co-chordal number
\def\cosize{\operatorname{cosize}}
\def\depth{\operatorname{depth}}
\def\fdepth{\operatorname{fdepth}} % filtration depth
\def\Ht{\operatorname{ht}}
\def\indmatch{\operatorname{indmatch}} % induced matching number
\def\is{\operatorname{is}} % isolated sets
\def\lcm{\operatorname{lcm}}
\def\reg{\operatorname{reg}}
\def\projdim{\operatorname{proj\,dim}}
\def\Red{\mathrm{red}}
\def\sdepth{\operatorname{sdepth}} % Stanley depth
\def\size{\operatorname{size}}
\def\sreg{\operatorname{sreg}}
\def\supp{\operatorname{supp}}
\def\KK{{\mathbb K}}
\def\NN{{\mathbb N}}
\def\ZZ{{\mathbb Z}}
\def\calC{\mathcal{C}}
\def\calD{\mathcal{D}}
\def\calF{\mathcal{F}}
\def\bda{{\bm a}}
\def\bdx{{\bm x}}
\def\alert#1{\textcolor{Magenta}{#1}}
\def\ceil#1{\left\lceil #1 \right\rceil}
\def\deg{\operatorname{deg}}
\def\floor#1{\left\lfloor #1 \right\rfloor}
\def\Index#1{\emph{#1}}
\def\isom{\cong}
\def\mySet#1{\left\{ #1\right \}}
\begin{document}
\title[Bounding {S}tanley depth and {S}tanley regularity]{Bounds on the {S}tanley depth and {S}tanley regularity of edge ideals of clutters}
\author{Yi-Huang Shen}
\address{Wu Wen-Tsun Key Laboratory of Mathematics of CAS and School of Mathematical Sciences, University of Science and Technology of China, Hefei, Anhui, 230026, People's Republic of China}
\thanks{This work is supported by the National Natural Science Foundation of China (11201445). We thank Hailong Dao and Jay Schweig for helpful explanation during the preparation of the current work.}
\email{yhshen@ustc.edu.cn}
\subjclass[2010]{
05C65,   %(1980-now) Hypergraphs
05E40. % Combinatorial aspects of commutative algebra
}
\keywords{Squarefree monomial ideal; Stanley depth; Stanley regularity; Clutter}
\date{\today}
\begin{abstract}
  Let $I$ be the edge ideal of a clutter $\calC$ in a polynomial ring $S$.  In this paper, we present estimations of the Stanley depth of $I$ as well as the Stanley regularity of $S/I$, in terms of combinatorial data from the clutter $\calC$.
\end{abstract}
\maketitle

\section{Introduction}
Depth, projective dimension and Castelnuovo-Mumford regularity are three important and closely related invariants in commutative algebra and algebraic geometry. For example, if $S=\KK[x_1,\dots,x_n]$ is a polynomial ring over a field $\KK$ and $I\subset S$ is a monomial ideal, then thanks to Auslander and Buchsbaum \cite[Theorem 1.3.3]{MR1251956} we know that
\[
\depth(S/I)+\projdim(S/I)=n.
\]
If in addition $I$ is squarefree, then $I$ has an Alexander dual $I^\vee$ which is also a squarefree monomial ideal. Now a result of Terai \cite[Corollary 0.3]{MR1715588} asserts that
\[
\projdim(I)=\reg(S/I^\vee).
\]

There are numerous works trying to compute or estimate these three invariants. For instance Lyubeznik considered the size of monomial ideals in the article \cite{MR0921965}. Let $I=\bigcap_{i=1}^s Q_i$ be an irredundant primary decomposition of a monomial ideal $I$ in $S$, where the $Q_i$'s are also monomial ideals.  The \Index{size} of $I$, denoted by $\size(I)$, is the number $v+n-h-1$, where $v$ is the minimal number $t$ such that there exist $j_1<\cdots < j_t$ with $\sqrt{\sum_{k=1}^t Q_{j_k}} = \sqrt{\sum_{j=1}^s Q_j}$, and where $h=\Ht \sum_{j=1}^s Q_j$.  
Lyubeznik \cite[Proposition 2]{MR0921965} acquired that
\begin{equation}
  \depth(S/I)\ge \size(I)
  \label{Lyubeznik-0}
\end{equation}
 and consequently
\begin{equation}
  \depth(I)\ge \size(I)+1. \label{Lyubeznik-1}
\end{equation}

A related result, due to Herzog, Popescu and Vladoiu \cite[Theorem 3.1]{arXiv:1011.6462}, asserts that
\begin{equation}
  \sdepth(I)\ge \size(I)+1, \label{HPV}
\end{equation}
where $\sdepth(I)$ is the Stanley depth of $I$. We will explain the notion of Stanley depth in the next section.

It is conjectured by Stanley \cite{MR666158} that
\begin{equation}
  \sdepth(M)\ge \depth(M) \label{StanleyConjecture} 
\end{equation}
for all finitely generated $\ZZ^n$-graded $S$-module. Obviously, Stanley's conjecture \eqref{StanleyConjecture} for $M=I$ with Lyubeznik's result \eqref{Lyubeznik-1} implies the inequality \eqref{HPV}.

It is worth mentioning that squarefree monomial ideals can be naturally related to clutters. Among many others, recent work of Dao and Schweig \cite{arXiv:1301.2665}, H\`a and Woodroofe \cite{arXiv:1301.6779}, Lin and McCullough \cite{arXiv:1211.4301} and Woodroofe \cite{arXiv:1009.2756} provided several very nice bounds for estimating depth and Castelnuovo-Mumford regularity. These research all involve considerations of combinatorial data from the clutter associated to the squarefree monomial ideals. Thus, analogous to \cite{arXiv:1011.6462}, it is natural to ask whether the above work can find counterparts when estimating Stanley depth and Stanley regularity?

Now it is time to outline the structure of our paper. In Section 2, we provide preliminary background for the notions like clutter, filtration depth, Stanley depth and Stanley regularity respectively.

In section 3, we will study the method of Dao and Schweig \cite{arXiv:1301.2665}, and provide a lower bound of the Stanley depth of squarefree monomials $I$ in terms of the index of edge domination of the associated clutter.  If $\calC$ is a clutter, a collection $F$ of edges in $\calC$ is called \Index{edgewise dominant} if for every vertex $v\in V(\calC^\Red)$ which is not contained in some edge of $F$ or contained in a trivial edge, it has a neighbor contained in some edge of $F$.  The \Index{index of edgewise domination} is the number
  \[
  \epsilon(\calC)=\min\mySet{|F|:F\subset E(\calC)\text{ is edgewise dominant}}.
  \]
Our first main result is

\begin{thma}
 [{\ref{main-ied}}]
  Let $\calC$ be a clutter and $I(\calC)$ the corresponding edge ideal in $S$. Then
  \[
  \min\Set{\depth(S/I(\calC)),\ \sdepth(S/I(\calC))}\ge \epsilon(\calC)+n-|V(\calC^\Red)|.
  \]
\end{thma}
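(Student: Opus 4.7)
The plan is to prove the estimate by induction on $|V(\calC)|$, treating $\depth(S/I)$ and $\sdepth(S/I)$ in parallel since both obey the same short-exact-sequence lower bounds. If $E(\calC) = \emptyset$, then $I = 0$, $\epsilon(\calC) = 0$, $|V(\calC^\Red)| = 0$, and both invariants equal $n$, making the bound trivial. If there exists $v \in V(\calC) \setminus V(\calC^\Red)$, the variable $x_v$ does not occur in $I$; by the additivity of $\depth$ and $\sdepth$ under adjoining a free variable (recorded in the preliminary section), both sides of the inequality drop by $1$ upon removing $v$ from $V(\calC)$, so the induction applies. We may therefore assume $V(\calC) = V(\calC^\Red)$ and $\epsilon(\calC) \ge 1$.

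Fix a minimum edgewise dominant set $F \subset E(\calC)$, choose an edge $e \in F$, and select a vertex $v \in e$ (carefully; see below). From the short exact sequence
\[
0 \to S/(I:x_v) \xrightarrow{\,\cdot x_v\,} S/I \to S/(I,x_v) \to 0,
\]
the depth lemma gives $\depth(S/I) \ge \min\{\depth(S/(I:x_v)), \depth(S/(I,x_v))\}$. The analogous inequality $\sdepth(S/I) \ge \min\{\sdepth(S/(I:x_v)), \sdepth(S/(I,x_v))\}$ is obtained by pasting a Stanley decomposition of $S/(I,x_v)$ (which does not involve $x_v$) with the $x_v$-shift of a Stanley decomposition $\bigoplus u_j \KK[Y_j]$ of $S/(I:x_v)$, after adjoining $x_v$ to every $Y_j$ not already containing it; the resulting intervals remain disjoint and cover $S/I$ because $I$ is squarefree, so $(I:x_v^k) = (I:x_v)$ for all $k\ge 1$. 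Now $S/(I,x_v)$ is the edge ring of the deletion $\calC \setminus v$ on one fewer variable, and $S/(I:x_v)$ is the edge ring of the colon clutter $\calC_{:v}$ on $V(\calC)$ (with $v$ isolated), whose generating edges are $\{e' \setminus \{v\} : v \in e' \in E(\calC)\}$ together with $\{e' \in E(\calC) : v \notin e'\}$; in particular, every $2$-edge $\{v,u\}$ of $\calC$ yields a trivial edge $\{u\}$ in $\calC_{:v}$.

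Applying the inductive hypothesis to each quotient reduces the proof to the combinatorial inequalities
\begin{align*}
\epsilon(\calC \setminus v) + (n-1) - |V((\calC \setminus v)^\Red)| &\ge \epsilon(\calC) + n - |V(\calC^\Red)|, \\
\epsilon(\calC_{:v}) + n - |V(\calC_{:v}^\Red)| &\ge \epsilon(\calC) + n - |V(\calC^\Red)|.
\end{align*}
This combinatorial bookkeeping is the main obstacle. The natural strategy is to pick $v \in e$ so that its neighbors outside $e$ are already dominated by $F \setminus \{e\}$: then $F \setminus \{e\}$ remains essentially edgewise dominant for $\calC \setminus v$, and the unit drop in $\epsilon$ is paid for by the loss of $v$ (together with any vertices of $e \setminus \{v\}$ that become newly isolated) from $V^\Red$. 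On the colon side, the trivial edges of $\calC_{:v}$ arising from $2$-edges through $v$ cover the affected neighbors of $v$ for free, so an edgewise dominant set for $\calC_{:v}$ can be built from $F$ with at most the same cardinality, and any drop in $|V^\Red|$ is absorbed by a corresponding drop in $\epsilon(\calC_{:v})$. A careful case analysis on the sizes of the edges through $v$ and on the membership of $v$'s neighbors in other edges of $F$ should then close the argument.
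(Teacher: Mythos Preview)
Your proposal has a genuine gap: the entire combinatorial heart of the argument is left unproved. You correctly reduce the induction step to the pair of inequalities
\[
\epsilon(\calC\setminus v)+(n-1)-|V((\calC\setminus v)^{\Red})|\ \ge\ \epsilon(\calC),
\qquad
\epsilon(\calC_{:v})+n-|V(\calC_{:v}^{\Red})|\ \ge\ \epsilon(\calC),
\]
but you then stop at ``a careful case analysis \ldots\ should then close the argument.'' This is exactly the hard part. Because your single short exact sequence yields a \emph{minimum}, you need \emph{both} inequalities to hold for the \emph{same} vertex $v$. Your heuristic for choosing $v$ (``pick $v\in e$ so that its neighbors outside $e$ are already dominated by $F\setminus\{e\}$'') is not shown to be possible---for instance if $|F|=1$ the condition is vacuous only when $v$ has no neighbors outside $e$, which need not happen---and even when such a $v$ exists you do not verify that the two displayed inequalities follow.

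The paper proceeds differently and avoids this difficulty. It proves the stronger bound $\fdepth(S/I(\calC))\ge \epsilon(\calC)+n-|V(\calC^{\Red})|$ and then invokes $\fdepth\le\min\{\depth,\sdepth\}$. The induction is organized via the Dao--Schweig framework: one fixes a vertex $x$ in a non-trivial edge, lists its neighbors $y_1,\dots,y_t$, and successively forms $\calC_i=\calC+\{y_1\}+\cdots+\{y_i\}$. At each step one has $g(\calC_{i-1})\le\max\{g(\calC_i),\,g(\calC_{i-1}\!:\!\{y_i\})\}$, so the argument follows whichever branch is larger; one never needs both branches to beat the target simultaneously. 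The required combinatorial input is that $\epsilon$ satisfies condition \ref{DS4} with this choice of $A_i=\{y_i\}$, which is precisely what Dao and Schweig prove. Your single-step scheme replaces this iterated ``max'' structure by a one-shot ``min'' requirement, and that stronger requirement is neither established in your proposal nor supplied by the existing literature.
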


In section 4,  we employ the splitting method in Herzog, Popescu and Vladoiu \cite{arXiv:1011.6462}  and estimate the Stanley regularity of $S/I$. This work is related to the article \cite{arXiv:1211.4301} by Lin and McCullough. We show that 

\begin{thma}
  [{\ref{LM}}]
  Let $\calC=(V,E)$ be a clutter and $\calC'=(V,E')$ be the clutter obtained by removing all edges with free vertices from $\calC$. Let $\beta(\calC')$ be the matching number of $\calC'$. Then
  \[
  \sreg(S/I(\calC))\le |V|-|E|+|E'|-\beta(\calC').
  \]
\end{thma}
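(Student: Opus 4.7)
My plan is to adapt the splitting-by-a-variable technique of Herzog, Popescu, and Vladoiu~\cite{arXiv:1011.6462} from Stanley depth to the Stanley-regularity setting, then carry out essentially the same inductive combinatorial reduction that Lin and McCullough~\cite{arXiv:1211.4301} used for the ordinary regularity bound. The preliminary tool is a splitting lemma: if $I \subseteq S$ is a squarefree monomial ideal and $v \in V$ is any variable, then writing $S' = \KK[V \setminus v]$ and $I = J_0 + v J_1$ with $J_0, J_1 \subseteq S'$ (where $J_0$ collects the minimal generators of $I$ not involving $v$, and $v J_1$ those involving $v$), one has the canonical decomposition
\[
  S/I \;\cong\; S'/J_0 \;\oplus\; v\cdot (S'/(J_0 + J_1))[v],
\]
so that splicing optimal Stanley decompositions of the two summands gives
\[
  \sreg(S/I) \le \max\bigl\{\sreg(S'/J_0),\ 1 + \sreg(S'/(J_0 + J_1))\bigr\}.
\]

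The main induction is on the number $|E| - |E'|$ of edges with a free vertex. In the inductive step I pick $e \in E \setminus E'$ together with a free vertex $v$ of $e$ and apply the splitting lemma at $v$; since $v$ appears in only one minimal generator $m_e = v\,u_e$, the outputs are $J_0 = I(\calC \setminus e)$ and $J_1 = (u_e)$. The second quotient is the edge-ideal quotient of the clutter $\tilde{\calC}$ on $V \setminus v$ obtained from $\calC \setminus e$ by adjoining the smaller edge $e \setminus v$ and pruning any non-minimal members. Feeding $\calC \setminus e$ and $\tilde{\calC}$ into the induction hypothesis bounds both terms of the splitting inequality.

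The base case $|E| = |E'|$, i.e.\ $\calC = \calC'$, reduces to establishing $\sreg(S/I(\calC)) \le |V| - \beta(\calC)$. Here I would run a secondary induction on $\beta(\calC)$, now splitting at an endpoint $w$ of a maximum-matching edge $\{w, w'\}$ of $\calC$: at such a vertex $J_1$ picks up every edge incident to $w$, and a matching-theoretic argument shows that the updated clutters governing the two right-hand summands have matching number at least $\beta(\calC) - 1$, which powers the recursion. Note that splitting at a free vertex rather than at a matched one is already too weak in the first nontrivial example, the path $P_4$, so this choice of splitting vertex is essential in the base case.

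The principal technical obstacle is bookkeeping. The invariants $(|V|, |E|, |E'|, \beta(\calC'))$ do not transform cleanly under $\calC \mapsto \calC \setminus e$ or $\calC \mapsto \tilde{\calC}$: removing $e$ may create new free vertices among the vertices of $e \setminus v$ that had degree two in $\calC$, enlarging $|E| - |E'|$ for the smaller clutter, while adjoining the smaller edge $e \setminus v$ can shift both $(\tilde\calC)'$ and $\beta((\tilde\calC)')$. Verifying that each of the two terms on the right of the splitting inequality is dominated by the target bound $|V| - |E| + |E'| - \beta(\calC')$ after this bookkeeping is the most delicate point, and the graph case $|e| = 2$ (where $u_e$ is a single variable and $\tilde\calC$ is essentially $\calC$ with $v$ and its unique neighbour deleted) and the genuine hypergraph case $|e| \ge 3$ (where $e \setminus v$ has intermediate size) may need separate treatment.
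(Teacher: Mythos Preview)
Your approach is genuinely different from the paper's: the paper passes to the Alexander dual and proves $\sdepth(I(\calC)^\vee)\ge a+b$ (with $a=|E|-|E'|$, $b=\beta(\calC')$) by the Herzog--Popescu--Vladoiu splitting of $I^\vee=\bigcap_i P_i$ along \emph{all} variables of a matching edge $P_{a+1}$, with a single induction on the total number of edges. You instead stay on the primal side and split $S/I$ at one variable. Your splitting lemma is correct, and following Lin--McCullough is a natural idea.

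However, the inductive step does not close as you have set it up. First, the induction on $|E|-|E'|$ is not well-founded: deleting a free-vertex edge $e$ can create new free vertices on edges of $E'$, so $|E(\calD)|-|E'(\calD)|$ need not drop. More seriously, the bound on the second summand $1+\sreg(S'/I(\tilde\calC))$ can overshoot. Take $V=\{v,w,u,1,2\}$ with edges $\{v,u\},\{w,u\},\{u,1\},\{1,2\},\{2,u\}$: here $a=2$, $\calC'$ is the triangle on $\{u,1,2\}$ so $b=1$, and the target is $|V|-a-b=2$. Splitting at the free vertex $v$ of $e=\{v,u\}$ gives $\tilde\calC$ on $\{w,u,1,2\}$ with edges $\{u\}$ and $\{1,2\}$; the theorem's bound for $\tilde\calC$ is $4-2+0-0=2$, so the second term only yields $1+2=3>2$. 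The trouble is that adding the trivial edge $\{u\}$ absorbs the other pendant $\{w,u\}$ and leaves $w$ isolated, and your induction hypothesis applied to $\tilde\calC$ on all of $V\setminus v$ does not see this. A repair would have to pass to $\tilde\calC^{\mathrm{red}}$ and argue that the number of isolated vertices created always compensates, but that bookkeeping---exactly the point you flagged as delicate---is not carried out and is not obviously true in general (and your base case is phrased only for graphs, whereas $\calC$ may be an arbitrary clutter). The paper's dual-side splitting by a full edge avoids this accounting, because each $L_\tau$ lives in the complementary ring $S''$ and the induction hypothesis applies cleanly there.
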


In Section 5, we will start by establishing a key result that is similar to the Castelnuovo-Mumford regularity version by Kalai and Meshulam. We will apply it to give various upper bounds for the Stanley regularity of $S/I$ in terms of various packing invariants of the associated clutter.

To be more specific, we will study the notion of co-chordal cover number of a simple graph $G$, which is the minimum number of co-chordal subgraphs required to cover $G$. Similar to a result by Woodroofe \cite{arXiv:1009.2756}, we assert that

\begin{thma}
 [{\ref{cochord-thm}}]
For any simple graph $G$, we have $\sreg(S/I(G))\le \cochord(G)$.
\end{thma}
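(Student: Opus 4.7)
The plan is to mirror Woodroofe's proof of the corresponding ordinary regularity bound, proceeding by induction on $k := \cochord(G)$, with the Stanley-regularity analogue of Kalai--Meshulam's inequality (the key result announced at the beginning of Section 5) playing the role that Kalai--Meshulam plays in the classical argument.

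For the base case $k=1$ the graph $G$ is itself co-chordal, so by Fr\"oberg's theorem $I(G)$ has a linear resolution and $\reg(S/I(G))=1$. To upgrade this to $\sreg(S/I(G)) \le 1$, I would either invoke a direct comparison between $\sreg$ and $\reg$ for ideals with linear resolution (if such a comparison is already available from Section 2), or else construct an explicit Stanley decomposition of $S/I(G)$ by induction on the number of vertices, using a simplicial-vertex elimination order of the complement $\overline{G}$: peeling off maximal cliques of $\overline{G}$ in reverse Dirac order should yield Stanley subspaces $u\KK[Z]$ all of whose degrees $|u|$ are at most $1$.

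For the induction step, assume $k\ge 2$, pick a co-chordal cover $G=H_1\cup\cdots\cup H_k$, and set $H:=H_1$, $G':=H_2\cup\cdots\cup H_k$, so that $\cochord(G')\le k-1$. At the level of edge ideals the decomposition $E(G)=E(H)\cup E(G')$ translates to $I(G)=I(H)+I(G')$ because edge ideals are generated by the squarefree quadratic monomials corresponding to edges. Applying the Section 5 key lemma (the $\sreg$ analogue of Kalai--Meshulam) to this sum produces a subadditivity estimate
\[
\sreg(S/I(G)) \le \sreg(S/I(H)) + \sreg(S/I(G')).
\]
The base case supplies $\sreg(S/I(H))\le 1$, and the inductive hypothesis gives $\sreg(S/I(G'))\le k-1$, so $\sreg(S/I(G))\le k$.

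The main obstacle is twofold. The real subtlety is the base case: while Fr\"oberg's theorem takes care of ordinary regularity, for the Stanley analogue I would have to actually exhibit (or otherwise control) a Stanley decomposition certifying $\sreg\le 1$ for a general co-chordal graph, which likely rests on the linear-quotient structure provided by the Dirac simplicial-vertex ordering. The secondary obstacle is verifying that the Section 5 key lemma applies unconditionally to an arbitrary sum $I(H)+I(G')$ arising in the induction; if that lemma carries extra hypotheses on the interaction of its two summands, one may need to refine the cover or replace the naive splitting with a filtration-based workaround before the induction can close.
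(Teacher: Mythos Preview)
Your proposal is correct and matches the paper's approach: Lemma~\ref{KM-type}(a) is the unconditional subadditivity $\sreg(S/\sum_i I_i)\le\sum_i\sreg(S/I_i)$, so your secondary obstacle evaporates, and the co-chordal base case is exactly Theorem~\ref{cochordal1}, proved via simplicial-vertex induction as you anticipate. The only implementation difference is that the paper carries out that induction on the Alexander-dual side---Lemma~\ref{simplicial-vertex} shows $\sdepth(I^\vee)\ge\sdepth(J^\vee)$ after deleting a simplicial vertex of $\overline{G}$---rather than by building an explicit Stanley decomposition of $S/I(G)$.
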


Our last result is based on the notion of 2-collage, introduced by H\`a and Woodroofe in \cite{arXiv:1301.6779}. Let $\calC$ be a clutter.  Then a \Index{2-collage} for $\calC$ is a subset $C$ of edges with the property that for each $E\in E(\calC)$ we can delete a vertex $v$ so that $E\setminus\Set{v}$ is contained in some edge of $C$. We claim that 

\begin{thma}
  [{\ref{collages}}]
  If $\Set{E_1,\dots,E_s}$ is a $2$-collage in the clutter $\calC$, then 
  \[
  \sreg(S/I(\calC))\le \sum_{i=1}^s (|E_i|-1).
  \]
\end{thma}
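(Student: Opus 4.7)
The plan is to parallel the argument of H\`a and Woodroofe for Castelnuovo--Mumford regularity, substituting Stanley regularity at each step. The two ingredients I would need are (i) the appropriate short-exact-sequence bound for $\sreg$, which I expect is established as a key lemma in Section~5 of the paper in the form
\[
\sreg(S/I)\le \max\bigl\{\sreg(S/(I:x))+1,\ \sreg(S/(I,x))\bigr\}
\]
for every variable $x\in S$, and (ii) the combinatorial observation that the colon and sum operations on $I(\calC)$ by a variable correspond to the two standard ``deletion/contraction'' type operations on the clutter: $(I(\calC):x)$ is the edge ideal of the clutter obtained from $\calC$ by shrinking every edge $E\ni x$ to $E\setminus\{x\}$ (and keeping all other edges), while $(I(\calC),x)$ is the edge ideal of $\calC\setminus x$, the clutter with $x$ and every edge through $x$ removed.

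Armed with these, I would induct on $N:=\sum_{i=1}^{s}(|E_i|-1)$. When $N=0$ every $E_i$ is a singleton, the $2$-collage condition then forces every edge of $\calC$ to have cardinality at most two with one vertex among the $\{v_i\}$, and the inequality $\sreg(S/I(\calC))\le 0$ follows from a direct computation on the resulting very sparse clutter (after stripping isolated vertices). For the inductive step, pick an edge $E_1$ of the $2$-collage with $|E_1|\ge 2$ and a vertex $x\in E_1$. I would then verify:

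\textbf{(Link side.)} The family $\{E_1\setminus\{x\},E_2,\dots,E_s\}$ is a $2$-collage for the clutter associated to $(I(\calC):x)$. Indeed, every edge of that clutter is either an edge $E$ of $\calC$ not containing $x$, in which case the original $2$-collage witness for $E$ works (with a small check when the containing $E_i$ happens to be $E_1$, using that $x\in E_1$ and hence we may remove $x$ from $E$ to land inside $E_1\setminus\{x\}$), or it is $E\setminus\{x\}$ for some $E\ni x$, in which case the original witness for $E$ again serves. The new total size is $N-1$, so by induction $\sreg(S/(I:x))+1\le N$.

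\textbf{(Deletion side.)} By choosing $x\in E_1$ to be the vertex distinguished by the $2$-collage definition applied to the edge $E_1$ itself (so that $E_1\setminus\{x\}$ is contained in some $E_j$), the family $\{E_2,\dots,E_s\}$, possibly enlarged by the shrunken edges $\{E\setminus\{x\}:E\in E(\calC),\ x\in E\}$ already seen from above, forms a $2$-collage for $\calC\setminus x$ of total size at most $N-(|E_1|-1)\le N$, giving $\sreg(S/(I,x))\le N$ by induction. Combining the two estimates through the splitting lemma yields $\sreg(S/I(\calC))\le N$.

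The main obstacle I foresee is step (i): producing the short-exact-sequence bound for Stanley regularity. Unlike ordinary regularity, where this bound is immediate from the long exact sequence of $\Ext$, Stanley regularity is defined through Stanley decompositions (or dually via Stanley depth under Alexander duality), so one must construct a Stanley decomposition of $S/I$ out of Stanley decompositions of $S/(I:x)$ and $S/(I,x)$ and control the degrees of the resulting generators. Assuming that lemma is in hand from Section~5, the $2$-collage bookkeeping in the inductive step is routine: the only delicate point is the judicious choice of the vertex $x\in E_1$, which must simultaneously produce a small $2$-collage on both the link and the deletion.
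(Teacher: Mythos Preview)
Your plan hinges on a splitting inequality
\[
\sreg(S/I)\le \max\bigl\{\sreg(S/(I:x))+1,\ \sreg(S/(I,x))\bigr\}
\]
that you expect to find in Section~5. It is not there. The key lemma of Section~5 is instead the Kalai--Meshulam type bound (Lemma~\ref{KM-type}): for squarefree monomial ideals $I_1,\dots,I_s$,
\[
\sreg\Bigl(S\Big/\sum_{i=1}^s I_i\Bigr)\le \sum_{i=1}^s \sreg(S/I_i),
\]
obtained by passing to Alexander duals and invoking a Stanley-depth inequality for intersections. The paper's proof of the theorem then runs as follows: for each $e_i$ in the $2$-collage, let $H_i$ be the subclutter whose edges are those $e\in E(\calC)$ with $e\setminus\{v\}\subseteq e_i$ for some vertex $v$. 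By the definition of $2$-collage, $E(\calC)=\bigcup_i E(H_i)$, so $I(\calC)=\sum_i I(H_i)$; and each $H_i$ has the singleton $\{e_i\}$ as a $2$-collage, so Lemma~\ref{2collage} (proved via the Herzog--Popescu--Vladoiu splitting of $I(H_i)^\vee$, not via a colon/sum short exact sequence) gives $\sreg(S/I(H_i))\le |e_i|-1$. Summing via Lemma~\ref{KM-type}(a) finishes.

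So the obstacle you flag is real and is precisely what the paper circumvents; the introduction explicitly laments the absence of a depth-lemma analogue for Stanley invariants, and Section~5 is designed to bypass it. Even granting your splitting lemma, the inductive bookkeeping has gaps. On the link side, $E_1\setminus\{x\}$ and the remaining $E_i$ need not be \emph{edges} of the clutter attached to $I:x$ (they may fail to be minimal, and any $E_i$ containing $x$ must itself be shrunk), so the proposed family is not in general a subset of $E(\calC:x)$ as the definition of $2$-collage requires. On the deletion side, the ``shrunken edges'' $E\setminus\{x\}$ you propose to adjoin are not edges of $\calC\setminus x$ at all, and any $E_i\ni x$ with $i\ge 2$ is likewise lost; neither the membership condition nor the size bound $N-(|E_1|-1)$ is justified.
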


Here is the final comment before we start a new section. The Stanley's conjecture \eqref{StanleyConjecture} is still widely open so far. This happens partly due to the lack of powerful tools like long exact sequence and depth lemma \cite[Proposition 1.2.9]{MR1251956}. What we have so far that is most similar to the depth lemma is as follows: let
$0\to M\to N\to L\to 0$ be a short exact sequence of finitely generated $\ZZ^n$-graded $S$-modules, then
\begin{equation}
  \label{middle-sdepth}
  \sdepth(N)\ge \min\Set{\sdepth(M),\ \sdepth(L)}
\end{equation}
by \cite[Proposition 2.6]{bruns-2009}.
Since the research on depth, projective dimension and Castelnuovo-Mumford regularity depends heavily on applying the depth lemma (or similar results for the other two invariants), it is not a trivial work for establishing parallel results for Stanley depth and Stanley regularity. For instance, it is still conjectured \cite{arXiv:1011.6462} (but not established) that
\[
\sdepth(I)\ge \sdepth(S/I)+1.
\]

\section{Preliminaries}
We begin by recalling basic notation and terminology from commutative algebra and combinatorics. For further reading, one can refer to \cite{MR1251956}, \cite{herzog2013survey}, \cite{MR2724673} and \cite{arXiv:1310.7912}. 

\subsection{Clutters}
A \Index{clutter} $\calC=(V,E)$ over the vertex set $V(\calC)=V$ consists of a collection $E(\calC)=E$ of subsets of $V$, called the \Index{edges} of $\calC$, with the property that no edge contains another. Clutters are also known as \Index{simple hypergraphs} or \Index{Sperner systems}. 
We will only consider clutters whose vertex set is finite.

Two distinct vertices in $V(\calC)$ are \Index{neighbors} if there is an edge of $\calC$ that contains these vertices.
A vertex $v\in V(\calC)$ is \Index{isolated} if it does not appear in any edge in $E(\calC)$.  We will write $\is(\calC)$ for the set of isolated vertices and $\calC^\Red$ for the clutter from $\calC$ with its isolated vertices removed. 

An edge $e\in E(\calC)$ is \Index{trivial} if it contains only one vertex in $V(\calC)$. Trivial edges are also called \Index{isolated loops}. When the cardinality of each edge equals a fixed integer $d\ge 2$, the clutter $\calC$ is \Index{$d$-uniform}.

A collection of edges in $\calC$ is called a \Index{matching} if the edges in this collection are pairwise disjoint. The maximum size of a matching in $\calC$ is called its \Index{matching number}. The minimal size of a maximal matching is called the \Index{minimax matching number}.

For a nonempty subset $A$ of vertices in $\calC$, let $\calC+A$ denote the clutter whose edges are the minimal sets of $E(\calC)\cup \Set{A}$ and whose vertex set is still $V(\calC)$. Meanwhile, let $\calC:A$ be the clutter whose edges are the minimal sets of $\Set{e\setminus A: e\in E(\calC)}$ and whose vertex set is $V(\calC)\setminus A$.

For simplicity, we often identify vertex sets with subsets of the variables $\Set{\bdx}:=\Set{x_1,\dots,x_n}$. If $A$ is a subset of $\Set{\bdx}$, we write $\bdx^A$ for the squarefree monomial $\prod_{x\in A} x$ in $S=\KK[x_1,\dots,x_n]$. Thus, the clutter $\calC$ corresponds to a squarefree monomial ideal
\[
I(\calC)=\braket{\bdx^e:e\in E(\calC)} \subset S.
\]
This ideal is called the \Index{edge ideal} of $\calC$. Naturally, the clutters $\calC+A$ and $\calC:A$ correspond to the squarefree monomial ideals $\braket{I(\calC),\bdx^A}$ and $I(\calC):_S \bdx^A$ respectively.

\subsection{Filtration depth and Stanley depth}

Recall that a sequence
\[
\calF: 0=M_0\subset M_1 \subset \cdots \subset M_m=M
\]
of $\ZZ^n$-graded submodules of $M$ is a \Index{prime filtration} if each $M_i/M_{i-1}\isom (S/P_i)(-\bda_i)$ for some integral vectors $\bda_i\in \ZZ^n$ and some monomial prime ideals $P_i$. The set of the primes $\Set{P_1,\dots,P_m}$ is the \Index{support} of $\calF$, which shall be denoted by $\supp(\calF)$. Now
\[
\fdepth(\calF):=\min\Set{\dim(S/P):P\in \supp(\calF)}
\]
is the \Index{filtration depth} of $\calF$ and
\[
\fdepth(M):=\max\Set{\fdepth(\calF): \text{ $\calF$ is a prime filtration of $M$}}
\]
is the \Index{filtration depth} of $M$.

On the other hand, if $M$ is a nonzero finitely generated $\ZZ^n$-graded $S$-module, $u\in M$ is a homogeneous element and $Z$ is a subset of $\Set{\bdx}$, then $u\KK[Z]$ is the $\KK$-subspace of $M$ generated by all elements $uv$ where $v$ is a monomial in $\KK[Z]$. A presentation of $M$ as a finite direct sum of such spaces $\calD:M=\bigoplus_{i=1}^r u_i\KK[Z_i]$ is called a \Index{Stanley decomposition} of $M$. Set $\sdepth(\calD)=\min\mySet{|Z_i|:i=1,\dots,r}$ and
\[
\sdepth(M)=\max\Set{\sdepth(\calD) : \text{$\calD$ is a Stanley decomposition of $M$} }.
\]

We have the following relations among depth, filtration depth and Stanley depth.

\begin{lemma}
  [{\cite[Proposition 1.3]{arxiv.0712.2308}}]
  \label{fdepthsdepthdepth}
  Let $M$ be a nonzero finitely generated $\ZZ^n$-graded $S$-module. Then
  \[
  \fdepth(M)\le \depth(M),\ \sdepth(M) \le \min\Set{\dim(S/P):P\in \Ass(M)}.
  \]
\end{lemma}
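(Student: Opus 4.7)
The plan is to prove the two inequalities separately, since they rely on distinct techniques. For $\fdepth(M)\le \depth(M)$, I would fix an arbitrary prime filtration $\calF\colon 0=M_0\subset M_1\subset\cdots\subset M_m=M$ with $M_i/M_{i-1}\isom (S/P_i)(-\bda_i)$ and show by induction on $i$ that $\depth(M_i)\ge \fdepth(\calF)$. The key observation is that each monomial prime $P_i$ is generated by a subset of the variables, so $S/P_i$ is itself a polynomial ring; hence $\depth(S/P_i)=\dim(S/P_i)\ge \fdepth(\calF)$. The inductive step applies the depth lemma \cite[Proposition 1.2.9]{MR1251956} to the short exact sequence $0\to M_{i-1}\to M_i\to M_i/M_{i-1}\to 0$, which yields $\depth(M_i)\ge \min\{\depth(M_{i-1}),\depth(M_i/M_{i-1})\}\ge \fdepth(\calF)$. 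Setting $i=m$ and taking the maximum over all prime filtrations gives $\depth(M)\ge \fdepth(M)$.

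For the Stanley depth bound, let $\calD\colon M=\bigoplus_{i=1}^r u_i\KK[Z_i]$ be an arbitrary Stanley decomposition and let $P\in\Ass(M)$ with $P=\ann_S(v)$ for some $\ZZ^n$-homogeneous $v\ne 0$. Because the generators $u_i$ are $\ZZ^n$-homogeneous and each summand $u_i\KK[Z_i]$ is a $\ZZ^n$-graded $\KK$-subspace of $M$, the homogeneous element $v$ must lie in a single summand: there exist a unique index $i$, a monomial $w\in \KK[Z_i]$, and a scalar $c\in \KK^{\times}$ with $v=c\,u_iw$. For every variable $x_j\in Z_i$ the product $x_jw\in \KK[Z_i]$ is a nonzero monomial, so $x_jv=c\,u_i(x_jw)$ is a nonzero basis vector of the rank-one free $\KK[Z_i]$-module $u_i\KK[Z_i]$; thus $x_jv\ne 0$ in $M$, which forces $x_j\notin P$. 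Consequently $Z_i\subseteq \{x_1,\dots,x_n\}\setminus\{x_k:x_k\in P\}$, giving $|Z_i|\le n-\Ht(P)=\dim(S/P)$, and therefore $\sdepth(\calD)\le |Z_i|\le \dim(S/P)$. Letting $P$ range over $\Ass(M)$ and then $\calD$ range over all Stanley decompositions produces the desired bound on $\sdepth(M)$.

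Neither half has a deep obstacle, but the subtle point in the Stanley decomposition argument is the claim that a $\ZZ^n$-homogeneous element $v$ is a $\KK$-scalar multiple of a single basis monomial $u_iw$, rather than a genuine combination spread across several summands. This is precisely where the fine $\ZZ^n$-grading of the decomposition is used, and it is also why the lemma is stated in the $\ZZ^n$-graded (rather than coarser) setting. I also emphasize that the argument only establishes the one-sided containment $P\subseteq (x_j:x_j\notin Z_i)$, which is weaker than equality but is nevertheless sufficient to bound $|Z_i|$ above by $\dim(S/P)$.
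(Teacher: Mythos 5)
The first inequality, $\fdepth(M)\le\depth(M)$, you prove correctly and by the standard argument. But you have parsed the displayed statement as two unrelated inequalities, and this hides the part of the lemma that the paper actually uses: as the invocation in Theorem~\ref{main-ied} shows (``$\fdepth(M)\le \min\Set{\depth(M),\sdepth(M)}$ by Lemma~\ref{fdepthsdepthdepth}''), the display is a chain, asserting in particular that $\fdepth(M)\le\sdepth(M)$. You never prove this, and it is the only consequence of the lemma invoked later. The missing piece is obtained by turning a prime filtration into a Stanley decomposition: if $M_i/M_{i-1}\isom (S/P_i)(-\bda_i)$, choose a homogeneous preimage $u_i\in M_i$ of the generator and set $Z_i=\Set{x_j:x_j\notin P_i}$; the composite $\KK[Z_i]\to u_i\KK[Z_i]\to M_i/M_{i-1}$ is an isomorphism, so $u_i\KK[Z_i]$ is free and meets $M_{i-1}$ trivially, whence $M=\bigoplus_i u_i\KK[Z_i]$ is a Stanley decomposition with $\sdepth\ge\fdepth(\calF)$. (The paper itself offers no proof; it cites \cite{arxiv.0712.2308}.)

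The second half breaks at exactly the point you flag as subtle. A $\ZZ^n$-homogeneous element of a finitely generated $\ZZ^n$-graded module need \emph{not} lie in a single Stanley summand, because $\dim_\KK M_{\bda}$ can exceed $1$. Concretely, take $S=\KK[x]$ and $M=S\oplus S/(x)$ with generators $e_1,e_2$ in degree $0$, and the Stanley decomposition $M=e_1\KK[x]\oplus (e_1+e_2)\KK$. Here $P=(x)=\ann(e_2)\in\Ass(M)$, and $e_2=-e_1+(e_1+e_2)$ has nonzero components in both summands; moreover $x\in Z_1\cap P$, and $x\cdot(-e_1)\ne 0$ inside the first summand even though $xe_2=0$ in $M$, because the contribution $x(e_1+e_2)=xe_1$ from the second summand cancels it. So the step ``$x_jv=c\,u_i(x_jw)$ is a nonzero basis vector, hence $x_j\notin P$'' is invalid: multiplication by $x_j$ does not preserve the direct sum decomposition, and cancellation across summands can occur. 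Your argument is correct precisely when $\dim_\KK M_{\bda}\le 1$ for all $\bda$ (e.g.\ for $M=I$ or $M=S/I$), but the lemma is stated for arbitrary finitely generated $\ZZ^n$-graded modules, and in that generality the bound $\sdepth(M)\le\dim(S/P)$ requires a genuinely different argument.
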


Later in this paper, we need the following facts for Stanley depth.

\begin{lemma}
  \label{basic-facts}
  Let $M$ be a nonzero finitely generated $\ZZ^n$-graded $S$-module. 
  \begin{enumerate}[a]
    \item The module $M$ is $S$-free if and only if $\sdepth(M)=n$. 
    \item If $\sdepth(M)=0$, then $\depth(M)=0$.
    \item If $\depth(M)=0$ and $\dim_{\KK}M_\bda \le 1$ for all $\bda\in\ZZ^n$, then $\sdepth(M)=0$.
  \end{enumerate}
\end{lemma}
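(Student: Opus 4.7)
For (a), the forward implication $M$ free $\Rightarrow\sdepth(M)=n$ is immediate: any homogeneous $S$-basis yields the Stanley decomposition with every $Z_i=\{x_1,\dots,x_n\}$. For the converse I would take a decomposition realizing $\sdepth(M)=n$, observe that each $Z_i$ must equal $\{x_1,\dots,x_n\}$, and appeal to the direct-sum condition, which forces the monomial multiples $u_i\bdx^\bdb$ to be $\KK$-linearly independent in $M$. This gives $\ann_S(u_i)=0$, so each Stanley space $u_i\KK[\{\bdx\}]$ is $S$-isomorphic to $S(-\deg u_i)$, exhibiting $M$ as free.

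For (b), I would prove the contrapositive: $\depth(M)\ge 1$ implies $\sdepth(M)\ge 1$. Since $M$ is $\ZZ^n$-graded, $\Ass(M)$ consists of monomial primes, so $\depth(M)\ge 1$ rules out $\frakm$ and gives a variable $x_j$ that is a nonzerodivisor on $M$. I would then pick any Stanley decomposition $\overline M=M/x_jM=\bigoplus_i\bar u_i\,\KK[Z_i']$ of $\overline M$ as a finitely generated $\ZZ^n$-graded module over $S/(x_j)\cong\KK[x_1,\dots,\widehat{x_j},\dots,x_n]$, lift each $\bar u_i$ to a homogeneous $u_i\in M$, and verify that $V=\bigoplus_i u_i\,\KK[Z_i']$ is a graded $\KK$-complement of $x_jM$ in $M$. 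Since $x_j$ acts injectively and the multidegrees of $M$ are bounded below componentwise (so $\bigcap_k x_j^kM=0$), iterating $M=V\oplus x_jM$ produces
\[
M=\bigoplus_{k\ge 0}x_j^kV=\bigoplus_i u_i\,\KK[Z_i'\cup\{x_j\}],
\]
a Stanley decomposition in which every Stanley space has cardinality $\ge 1$.

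For (c), the strategy is local around a socle element. I would pick a homogeneous $u\in M$ with $\frakm u=0$ and set $\bda=\deg u$; the multiplicity hypothesis together with $u\ne 0$ forces $M_\bda=\KK u$. In any Stanley decomposition $M=\bigoplus u_i\KK[Z_i]$, the one-dimensional piece $M_\bda$ contains a single Stanley-basis element, so $u=\alpha u_{i_0}\bdx^{\bdb_0}$ for a unique pair $(i_0,\bdb_0)$. If $Z_{i_0}$ contained some $x_j$, then $u_{i_0}\bdx^{\bdb_0+\bde_j}$ would be another nonzero basis element of the same Stanley space and we would have $x_ju\ne 0$, contradicting $x_ju=0$; hence $Z_{i_0}=\emptyset$ and $\sdepth(M)=0$.

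The hard part will be (b): the abstract existence of a nonzerodivisor must be promoted to an explicit Stanley decomposition with every $|Z_i|\ge 1$. The substantive technical points are the $\KK$-linear independence of the lifted family $\{u_i\bdx^\bdb\}$ in $M$ (which I would deduce from independence of its image in $\overline M$) and the Krull-type vanishing $\bigcap_k x_j^kM=0$, which depends essentially on the $\ZZ^n$-graded finite generation of $M$.
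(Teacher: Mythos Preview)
The paper does not give its own argument here; it simply cites \cite[Lemma~1.2, Theorem~1.4]{arXiv.org:0808.2657} and \cite[Proposition~2.13]{bruns-2009}. Your arguments for (a) and (c) are correct and are the standard ones (for (a) you are implicitly using the usual convention that a Stanley space $u\KK[Z]$ is required to be a free $\KK[Z]$-module, which is what makes $\ann_S(u_i)=0$ automatic).

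Your argument for (b), however, contains a genuine gap. The sentence ``$\depth(M)\ge 1$ rules out $\frakm$ and gives a variable $x_j$ that is a nonzerodivisor on $M$'' does not follow: knowing $\frakm\notin\Ass(M)$ only tells you that each associated prime omits \emph{some} variable, not that a single variable is omitted by all of them simultaneously. A concrete counterexample is $S=\KK[x,y]$ and $M=S/(x)\oplus S/(y)$. Here $\Ass(M)=\{(x),(y)\}$, so $\depth(M)=1$ (for instance $x+y$ is $M$-regular), yet $x$ kills $(\bar 1,0)$ and $y$ kills $(0,\bar 1)$, so neither variable is a nonzerodivisor. Your lifting construction, which is perfectly valid once a variable nonzerodivisor is available, therefore cannot be started on $M$ itself.

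The construction can be salvaged by feeding it through a filtration. Set $L_1=M$ and $L_{j+1}=L_j\cap(0:_M x_j)$ for $1\le j\le n$, so that $L_{n+1}=(0:_M\frakm)=0$ because $\depth(M)\ge 1$. On each nonzero quotient $L_j/L_{j+1}$ the variable $x_j$ acts injectively by construction, so your lifting argument applies there and yields $\sdepth(L_j/L_{j+1})\ge 1$; the short exact sequence inequality \eqref{middle-sdepth} then propagates this along the chain $0=L_{n+1}\subset\cdots\subset L_1=M$ to give $\sdepth(M)\ge 1$.
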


The proof can be found, for instance, in \cite[Lemma 1.2, Theorem 1.4]{arXiv.org:0808.2657} and \cite[Proposition 2.13]{bruns-2009}.

\subsection{Stanley regularity}
The starting point is Terai's duality theorem via Alexander dual for squarefree monomial ideals. Let $I=(x_{11}\cdots x_{1i_1},\dots,x_{s1}\cdots x_{si_s})\subset S=\KK[x_1,\dots,x_n]$ be a squarefree monomial ideal. Then the \Index{Alexander dual} of $I$ is 
\[
I^\vee=(x_{11},\dots,x_{1i_1})\cap \cdots \cap (x_{s1},\dots,x_{si_s})
\]
with the property that $(I^\vee)^\vee=I$. Terai \cite[Corollary 0.3]{MR1715588} (see also \cite[8.1.10]{MR2724673}) proved that
\[
\projdim(I)=\reg(S/I^\vee).
\]

As established in \cite{MR1833633} and \cite{MR1741555}, Alexander duality can also be extended to finitely generated squarefree modules. Let $M$ be such an module and 
$\calD:M=\bigoplus_{i=1}^r u_i\KK[Z_i]$ be a squarefree Stanley decomposition, then the \Index{Stanley regularity} of $\calD$ is
\[
\sreg(\calD):=\max\Set{\deg(u_i):1\le i\le m}
\]
and
the \Index{Stanley regularity} of $M$ is
\[
\sreg(M):=\min\Set{\sreg(\calD):\text{$\calD$ is a Stanley decomposition of $M$}}.
\]

Similar to \eqref{middle-sdepth}, it is straight forward to see that for a short exact sequence $0\to M\to N \to L \to 0$ of finitely generated squarefree modules, we have
\[
\sreg(N) \le \min\Set{\sreg(M),\ \sreg(L)}.
\]

The following key result plays the same role as the Terai's duality theorem.

\begin{lemma}
  [{\cite[Theorem 3.7]{MR2563149}, \cite[Corollary 46]{herzog2013survey}}]
  \label{sdepth-sreg}
  If $I$ is a squarefree monomial ideal of $S$, then $\sreg(S/I)=n-\sdepth(I^\vee)$ and $\sreg(I)=n-\sdepth(S/I^\vee)$.
\end{lemma}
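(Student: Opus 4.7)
The plan is to prove both identities simultaneously by constructing a single order-reversing bijection between squarefree Stanley decompositions of $S/I$ and those of $I^\vee$, induced by combinatorial Alexander duality. Write $I=I_\Delta$ for the Stanley--Reisner complex $\Delta$ on $[n]$, so that $I^\vee=I_{\Delta^\vee}$ with $\Delta^\vee=\{F\subseteq[n]:[n]\setminus F\notin\Delta\}$. The first step is to invoke the by-now standard reduction (see e.g.\ \cite{MR2563149}) that $\sdepth$ and $\sreg$ of a squarefree module are computed by \emph{squarefree} Stanley decompositions $\bigoplus_i\bdx^{A_i}\KK[Z_i]$ with $A_i\subseteq Z_i$; such a decomposition amounts to a partition of the squarefree support of the module into Boolean intervals $[A_i,Z_i]$, contributing $|Z_i|$ to $\sdepth$ and $|A_i|$ to $\sreg$.

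Translated into this language, squarefree Stanley decompositions of $S/I$ are partitions of $\Delta$ into Boolean intervals $[F_i,Y_i]$ whose top $Y_i$ lies in $\Delta$ (which, since $\Delta$ is downward closed, forces every member of the interval to lie in $\Delta$), while squarefree Stanley decompositions of $I^\vee$ are partitions of $\{[n]\setminus F:F\in\Delta\}$ into Boolean intervals $[A_j,B_j]$. The complement map $\sigma\colon F\mapsto [n]\setminus F$ is an order-reversing involution on $2^{[n]}$ restricting to a bijection between $\Delta$ and $\{[n]\setminus F:F\in\Delta\}$, and sending an interval $[F,Y]$ to the interval $[[n]\setminus Y,\,[n]\setminus F]$. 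A routine check confirms that the validity condition on each side (that the bottom of the $I^\vee$-interval $[n]\setminus A$ lies in $\Delta$ vs.\ that the top $Y$ of the $S/I$-interval lies in $\Delta$) transports correctly under $\sigma$, so $\sigma$ establishes a bijection between the two families of squarefree Stanley decompositions.

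For matched pieces $[F_i,Y_i]\leftrightarrow[[n]\setminus Y_i,\,[n]\setminus F_i]$, the top size on the $I^\vee$ side equals $n-|F_i|$ and the bottom size equals $n-|Y_i|$. Hence, if $\calD$ is a squarefree Stanley decomposition of $S/I$ and $\calD'$ its Alexander dual, then
\[
\sdepth(\calD')=n-\sreg(\calD)\qquad\text{and}\qquad \sreg(\calD')=n-\sdepth(\calD).
\]
Optimizing over all squarefree Stanley decompositions then yields $\sdepth(I^\vee)=n-\sreg(S/I)$ together with $\sreg(I^\vee)=n-\sdepth(S/I)$. The second identity in the lemma follows from the latter after substituting $I^\vee$ in place of $I$ and using $(I^\vee)^\vee=I$. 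I expect the main technical hurdle to lie in the Step-1 reduction---precisely justifying that restricting to squarefree Stanley decompositions loses nothing when computing $\sdepth$ and $\sreg$ of a squarefree module---which ultimately rests on the partitioning theory for squarefree modules in the cited references; once this is in hand, the rest is a direct complement-of-complement verification.
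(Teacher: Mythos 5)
The paper offers no proof of this lemma---it is quoted directly from Soleyman Jahan \cite{MR2563149} and Herzog's survey---and your complementation-of-Boolean-intervals argument is exactly the duality mechanism behind the proofs in those sources: the order-reversing involution $F\mapsto[n]\setminus F$ carries interval partitions of $\Delta$ to interval partitions of the squarefree part of $I^\vee$, swapping tops with bottoms and hence $\sdepth$ with $n-\sreg$. Your argument is correct, with the one genuine technical point being the one you already flag, namely that $\sdepth$ and $\sreg$ of a squarefree module are attained by squarefree (interval-partition) Stanley decompositions; that reduction is precisely what the cited references supply.
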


\begin{remark}
  \label{reduction}
  Suppose $I$ is a squarefree monomial ideal of $S=\KK[x_1,\dots,x_n]$ and $S'=S[x_{n+1}]$. Then $\sreg(IS')=\sreg(I)$ and $\sreg(S'/(IS'))=\sreg(S/I)$ by virtue of the above lemma together with \cite[Lemma 3.6]{arxiv.0712.2308}. In other words, the isolated vertices are irrelevant for computing the Stanley regularity of edge ideals.
\end{remark}

The following inequality is dual to the Stanley's conjecture \eqref{StanleyConjecture}.

\begin{conjecture}
  [{\cite{MR2563149}}]
  \label{sregConjecture}
  Let $J\subset I$ be squarefree monomial ideals. Then $\sreg(I/J)\le \reg(I/J)$.
\end{conjecture}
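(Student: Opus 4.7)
The plan is to reduce Conjecture \ref{sregConjecture} to Stanley's depth conjecture \eqref{StanleyConjecture} via Alexander duality, and thereby either obtain an equivalent reformulation or, at least, harvest unconditional instances from the classes in which Stanley's conjecture is already known.

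First I would treat the special case $J=0$, where the statement reads $\sreg(S/I)\le \reg(S/I)$. Lemma \ref{sdepth-sreg} gives $\sreg(S/I)=n-\sdepth(I^\vee)$. On the other hand, Terai's duality $\reg(S/I)=\projdim(I^\vee)$ combined with the Auslander--Buchsbaum formula $\depth(I^\vee)+\projdim(I^\vee)=n$ yields $\reg(S/I)=n-\depth(I^\vee)$. Comparing these, the inequality $\sreg(S/I)\le\reg(S/I)$ is exactly Stanley's inequality $\sdepth(I^\vee)\ge\depth(I^\vee)$ applied to the ideal $I^\vee$. So in this special case, Conjecture \ref{sregConjecture} is literally equivalent to Stanley's conjecture \eqref{StanleyConjecture} for the Alexander dual ideal; any unconditional class of ideals where Stanley's conjecture holds (for example, those covered by \eqref{HPV}, Cohen--Macaulay squarefree cases, small-dimensional polynomial rings, etc.) transports to an unconditional class for $\sreg(S/I)\le\reg(S/I)$.

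Second, to cover the general case $J\subsetneq I$, I would invoke the Alexander duality of finitely generated squarefree modules developed in \cite{MR1833633} and \cite{MR1741555}, under which the squarefree module $I/J$ possesses an Alexander dual $(I/J)^\vee$. The squarefree-module generalization of Lemma \ref{sdepth-sreg} should give $\sreg(I/J)=n-\sdepth((I/J)^\vee)$, and Yanagawa's extension of Terai's identity together with the Auslander--Buchsbaum-type formula for squarefree modules should give $\reg(I/J)=n-\depth((I/J)^\vee)$. Granted these two dualities, Conjecture \ref{sregConjecture} in full generality is equivalent to Stanley's conjecture \eqref{StanleyConjecture} applied to the single squarefree module $(I/J)^\vee$.

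The principal obstacle is that Stanley's conjecture is itself widely open, so this strategy cannot yield the full conjecture unconditionally; it converts it into a problem of the same difficulty. A secondary (but genuine) obstacle is to write down, with care, the module-level Terai/Auslander--Buchsbaum identity $\reg(M)=n-\depth(M^\vee)$ for an arbitrary finitely generated squarefree module $M$ of the form $I/J$: the ordinary squarefree-module Alexander duality is well-developed at the level of ideals and quotient rings, but for a subquotient like $I/J$ one may have to pass through the derived-category formulation of Yanagawa's duality to isolate a clean equality rather than merely an inequality. Accordingly, my realistic proposal is (i) to establish the equivalence with Stanley's conjecture for squarefree modules as above, and (ii) to derive unconditional corollaries by intersecting this equivalence with the known classes where Stanley's conjecture has been verified.
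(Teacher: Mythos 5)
The statement you were asked to prove is labeled as a \emph{conjecture} in the paper (Conjecture \ref{sregConjecture}, attributed to \cite{MR2563149}), and the paper offers no proof of it: it is stated in the preliminaries, described as ``dual to'' Stanley's conjecture \eqref{StanleyConjecture}, and used only as motivation for the unconditional bounds proved later. So there is no proof in the paper to compare yours against, and your proposal --- as you yourself acknowledge --- is not a proof either. What you have written is a reduction of the conjecture to Stanley's conjecture for the Alexander dual module, which is a problem of exactly the same (open) difficulty.

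That said, the reduction you carry out in the case $J=0$ is correct and is precisely the duality the paper is alluding to: $\sreg(S/I)=n-\sdepth(I^\vee)$ by Lemma \ref{sdepth-sreg}, while Terai's theorem applied to $I^\vee$ together with the Auslander--Buchsbaum formula gives $\reg(S/I)=\projdim(I^\vee)=n-\depth(I^\vee)$, so the claimed inequality is equivalent to $\sdepth(I^\vee)\ge\depth(I^\vee)$. For a general subquotient $I/J$ you correctly identify the missing ingredient, namely the module-level identities $\sreg(I/J)=n-\sdepth((I/J)^\vee)$ and $\reg(I/J)=n-\depth((I/J)^\vee)$; these are exactly what \cite{MR2563149}, together with the squarefree-module Alexander duality of \cite{MR1833633} and \cite{MR1741555}, supplies, and the equivalence of Conjecture \ref{sregConjecture} with Stanley's conjecture for squarefree modules is essentially the content of that reference. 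The genuine gap is therefore not a technical slip but the fact that the endpoint of your reduction, Stanley's inequality \eqref{StanleyConjecture}, is itself wide open (as the paper emphasizes in its introduction), so no unconditional proof of Conjecture \ref{sregConjecture} results except on the special classes where Stanley's conjecture is known, e.g.\ via \eqref{HPV}. Your part (ii) --- harvesting unconditional corollaries on such classes --- is reasonable but falls short of the stated conjecture, which remains open both in the paper and in your proposal.
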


\begin{remark}
  Let $I\subset S$ be a squarefree monomial ideal. Then $\sreg(S/I)=0$ if and only if $I$ is a prime ideal generated by a set of variables. To see this, it suffices to mention that $\sdepth(I^\vee)=n$ if and only if $I^\vee$ is principal.
\end{remark}

\section{Edge domination and Stanley depth}
Let $\calC$ be a clutter. The following definition is due to \cite{arXiv:1301.2665}.

\begin{definition}
  \label{ied}
  A collection $F$ of edges in $\calC$ is called \Index{edgewise dominant} if for every vertex $v\in V(\calC^\Red)$ which is not contained in some edge of $F$ or contained in a trivial edge, it has a neighbor contained in some edge of $F$.  The \Index{index of edgewise domination} is the number
  \[
  \epsilon(\calC)=\min\mySet{|F|:F\subset E(\calC)\text{ is edgewise dominant}}.
  \]
\end{definition}

Dao and Schweig \cite[Theorem 3.2]{arXiv:1301.2665} proved that $\projdim(S/I(\calC))\le |V(\calC^\Red)|-\epsilon(\calC^\Red)$. This result, by a theorem of Auslander and Buchsbaum \cite[Theorem 1.3.3]{MR1251956}, is equivalent to saying that
\begin{equation}
  \depth(S/I(\calC)) \ge \epsilon(\calC)+n-|V(\calC^\Red)|.
  \label{DS}
\end{equation}
It is clear that Stanley's conjecture \eqref{StanleyConjecture} for $M=S/I(\calC)$ with the above inequality implies that
\begin{equation}
  \sdepth(S/I(\calC)) \ge \epsilon(\calC)+n-|V(\calC^\Red)|.
  \label{DS-sreg}
\end{equation}
This is the result that we want to establish in this section. To this end, let us go over some basic concepts and constructions from the original paper \cite{arXiv:1301.2665}.

\begin{definition}
  A collection $\Phi$ of clutters is \Index{hereditary} if for any clutter $\calC\in \Phi$ and any nonempty subset $A$ of vertices of $\calC$, the clutters $\calC:A$, $\calC+A$  and $\calC^\Red$ are all in $\Phi$.
\end{definition}

Suppose $\Phi$ is a hereditary collection of clutters and $f:\Phi\to \NN$ is a function. We consider the following conditions for $f$.
\begin{enumerate}[label=(\textbf{DS}.\arabic*)]
  \item \label{DS0} $f(\calC)\le \max\Set{f(\calC+A),\ f(\calC:A)}$ for all $\calC\in \Phi$ and nonempty $A\subset V(\calC)$.
  \item \label{DS1} $f(\calC^\Red)=f(\calC)$ for all $\calC\in \Phi$.
  \item \label{DS2} $f(\calC)\le |V(\calC)|$ when $E(\calC)=\emptyset$.
  \item \label{DS2'} $f(\calC)=0$ when $E(\calC)=\emptyset$.
  \item \label{DS3} $f(\calC)=0$ when $\calC$ has only trivial edges.
  \item \label{DS3'} $f(\calC)\le |V(\calC)|$ when $\calC$ has only trivial edges.
  \item \label{DS4} For any $\calC\in \Phi$ with at least one non-trivial edge, there exists a sequence of nonempty subsets $A_1,\dots,A_t$ of $V(\calC)$ such that for the clutters $\calC_i:=\calC+\sum_{j=1}^i A_j$, the following properties are satisfied:
    \begin{itemize}
      \item $|\is(\calC_t)|>0$ and $f(\calC_t)+|\is(\calC_t)|\ge f(\calC)$, and
      \item for each $i$, $f(\calC_{i-1}:A_i)+|\is(\calC_{i-1}:A_i)|+|A_i|\ge f(\calC)$.
    \end{itemize}
\end{enumerate}

\begin{observation}
  When $g(\calC)=\projdim(\KK[x_1,\dots,x_n]/I(\calC))$ for clutters $\calC$ with $V(\calC)=\Set{x_1,\dots,x_n}$, it is clear that $g$ satisfies the above conditions \ref{DS0}, \ref{DS1}, \ref{DS2'} and \ref{DS3'}.
\end{observation}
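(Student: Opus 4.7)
The plan is to verify each of the four conditions \ref{DS0}, \ref{DS1}, \ref{DS2'} and \ref{DS3'} separately for $g(\calC)=\projdim(\KK[V(\calC)]/I(\calC))$, relying only on standard homological facts about monomial ideals.

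For \ref{DS0}, the main input is the short exact sequence
\[
0\longrightarrow S/(I(\calC):\bdx^A) \xrightarrow{\;\cdot\, \bdx^A\;} S/I(\calC) \longrightarrow S/(I(\calC)+(\bdx^A))\longrightarrow 0.
\]
I would first check directly from the definitions of $\calC:A$ and $\calC+A$ that $I(\calC)+(\bdx^A)=I(\calC+A)$ and that the minimal generators of $I(\calC):\bdx^A$ are exactly the monomials $\bdx^e$ with $e\in E(\calC:A)$. Since the variables in $A$ then play the role of free variables over $\KK[V(\calC)\setminus A]$, one has $\projdim_S(S/(I(\calC):\bdx^A))=g(\calC:A)$ and $\projdim_S(S/(I(\calC)+(\bdx^A)))=g(\calC+A)$. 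Applying the depth lemma to the short exact sequence and then invoking the Auslander--Buchsbaum formula $\depth+\projdim=n$ yields the desired bound $g(\calC)\le\max\Set{g(\calC+A),\ g(\calC:A)}$.

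The other three conditions should follow from one-line arguments. For \ref{DS1}, removing isolated vertices merely strips free variables from the ambient ring, which leaves the projective dimension of the quotient unchanged, so $g(\calC^\Red)=g(\calC)$. For \ref{DS2'}, $E(\calC)=\emptyset$ means $I(\calC)=0$, so $S/I(\calC)=S$ is free and $g(\calC)=0$. For \ref{DS3'}, if every edge of $\calC$ is trivial then $I(\calC)$ is generated by a subset of the variables, so $S/I(\calC)$ is itself a polynomial subring and its projective dimension over $S$ is exactly the number of trivial edges, which is clearly bounded by $|V(\calC)|$.

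The only item carrying real content is \ref{DS0}, and even there the homological step is routine. What requires mild care is the bookkeeping that identifies the ideals $I(\calC):\bdx^A$ and $I(\calC)+(\bdx^A)$ with the edge ideals of $\calC:A$ and $\calC+A$ in the correct ambient rings, together with the passage between projective dimensions under extension by free variables. This explains why the author is comfortable calling the statement an observation rather than a lemma.
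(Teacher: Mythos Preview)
Your proposal is correct and matches the paper's implicit reasoning. The paper gives no proof at all here---it is stated as an observation and declared ``clear''---but in the proof of Theorem~\ref{main-theorem} the analogous conditions are verified for $n-\fdepth$ using exactly the short exact sequence
\[
0 \to S/(I(\calC):\bdx^A) \to S/I(\calC) \to S/(I(\calC),\bdx^A) \to 0
\]
for \ref{DS0} and the same one-line computations for \ref{DS1}, \ref{DS2'}, \ref{DS3'}, so your write-up fills in precisely the details the author had in mind.
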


The following key lemma with its proof is adapted from \cite[Lemma 3.3]{arXiv:1301.2665}.

\begin{lemma}
  Let $\Phi$ be a hereditary class of clutters and $f,g:\Phi\to \NN$ two functions such that
  \begin{enumerate}[a]
    \item $f$ satisfies conditions \ref{DS1}, \ref{DS2}, \ref{DS3} and \ref{DS4}, and
    \item $g$ satisfies conditions \ref{DS0}, \ref{DS1}, \ref{DS2'} and \ref{DS3'}.
  \end{enumerate}
  Then for any $\calC\in \Phi$, $f(\calC)+ g(\calC)\le |V(\calC^\Red)|$.
  \label{key-lemma}
\end{lemma}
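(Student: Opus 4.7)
The plan is to establish the inequality $f(\calC)+g(\calC)\le|V(\calC^\Red)|$ by strong induction on $|V(\calC^\Red)|$. Condition \ref{DS1}, which is assumed for both $f$ and $g$, shows that every quantity in sight depends only on $\calC^\Red$, so I begin by replacing $\calC$ with $\calC^\Red$ and reduce to proving $f(\calC)+g(\calC)\le|V(\calC)|$ under the standing hypothesis that $\calC$ has no isolated vertices. The base cases split into two degenerate regimes. If $E(\calC)=\emptyset$, then after the \ref{DS1}-reduction the vertex set is empty, so \ref{DS2} forces $f(\calC)=0$, while \ref{DS2'} gives $g(\calC)=0$ directly. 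If $\calC$ has only trivial edges, then \ref{DS3} gives $f(\calC)=0$ and \ref{DS3'} gives $g(\calC)\le|V(\calC)|$. In either case the desired bound is immediate.

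For the inductive step, assume $\calC$ has at least one non-trivial edge, and let $A_1,\dots,A_t$ together with $\calC_i=\calC+\sum_{j=1}^i A_j$ be the sequence supplied by \ref{DS4} applied to $f$. The key move is to use \ref{DS0} iteratively along the chain $\calC=\calC_0,\calC_1,\dots,\calC_t$: each application bounds $g(\calC_{i-1})$ by the maximum of $g(\calC_{i-1}:A_i)$ and $g(\calC_i)$, so by concatenation
\[
g(\calC)\le\max\bigl\{\,g(\calC_0:A_1),\,g(\calC_1:A_2),\,\dots,\,g(\calC_{t-1}:A_t),\,g(\calC_t)\,\bigr\}.
\]
I then case-split on where this maximum is attained. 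In the first case the max is $g(\calC_{i-1}:A_i)$ for some $i$; I combine the \ref{DS4} bound $f(\calC)\le f(\calC_{i-1}:A_i)+|\is(\calC_{i-1}:A_i)|+|A_i|$ with the inductive hypothesis applied to $\calC_{i-1}:A_i$, whose reduced vertex set has size $|V(\calC)|-|A_i|-|\is(\calC_{i-1}:A_i)|$ and is therefore strictly smaller than $|V(\calC)|$ because $A_i$ is nonempty. After adding, the $|\is|$ term fills in the gap between $|V^\Red|$ and $|V|$ of $\calC_{i-1}:A_i$, and the $|A_i|$ term exactly restores the deleted vertices, yielding $|V(\calC)|$ on the nose. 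In the second case the max is $g(\calC_t)$; I apply induction instead to $\calC_t^\Red$, whose vertex count is strictly smaller than $|V(\calC)|$ precisely because \ref{DS4} guarantees $|\is(\calC_t)|>0$, and I combine with the \ref{DS4} bound $f(\calC)\le f(\calC_t)+|\is(\calC_t)|$; again the counting matches exactly.

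The main obstacle is not conceptual but combinatorial bookkeeping. One must track four quantities simultaneously—$|V|$, $|V^\Red|$, $|\is|$, and the $|A_i|$—and verify that they combine to give $|V(\calC)|$ rather than something larger. Behind this bookkeeping lies the observation that the axioms of \ref{DS4} are calibrated precisely to absorb these discrepancies: the extra $|\is|$ and $|A_i|$ slack in \ref{DS4} is exactly what is needed to pay for the passage from a vertex-count bound on the smaller clutter to a vertex-count bound on $\calC$. I must also check, at each inductive invocation, that $\calC_{i-1}:A_i$ and $\calC_t^\Red$ still lie in $\Phi$, which follows from the hereditary closure of $\Phi$ under $+A$, $:A$, and passage to the reduction.
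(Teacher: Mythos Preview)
Your proposal is correct and follows essentially the same route as the paper's proof: induct on the number of (non-isolated) vertices, use \ref{DS1} to assume $\calC=\calC^\Red$, dispose of the edge-free and trivial-edge cases via \ref{DS2}/\ref{DS2'} and \ref{DS3}/\ref{DS3'}, then iterate \ref{DS0} along the \ref{DS4}-sequence and case-split on whether the maximum lands at some $g(\calC_{i-1}:A_i)$ or at $g(\calC_t)$. The bookkeeping you describe matches the paper's computations line for line.
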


\begin{proof}
  We prove by induction on $|V(\calC)|$.  Because of the condition \ref{DS1}, we may assume that $\calC=\calC^\Red$ is a clutter without any isolated vertex. Meanwhile, by the conditions \ref{DS2}, \ref{DS2'}, \ref{DS3} and \ref{DS3'}, we may further assume that $\calC$ has at least one non-trivial edge. Therefore, there exists a sequence of sets $A_1,\dots,A_t$ as in the condition \ref{DS4} for the function $f$.

  By the condition \ref{DS0} for the function $g$, we are reduced to the following two cases.
  \begin{enumerate}[a]
    \item $g(\calC)\le g(\calC_1)\le \cdots \le g(\calC_t)=g({\calC_t}^\Red)$. Since $|\is(\calC_t)|>0$, by induction hypothesis, we have
      \begin{align*}
        g(\calC) &\le  g({\calC_t}^\Red)\le |V({\calC_t}^\Red)|-f(\calC_t) \\
        &\le  (|V(\calC)|-|\is(\calC_t)|)-(f(\calC)-|\is(\calC_t)|)\\
        &= |V(\calC)|-f(\calC).
      \end{align*}
    \item $g(\calC)\le g(\calC_1)\le \cdots g(\calC_{i-1})\le g(\calC_{i-1}:A_i)$ for some integer $i\in \Set{1,2,\dots,t}$. Since
      \[
      |V(\calC_{i-1}:A_i)|=|V(\calC_{i-1})|-|A_i|<|V(\calC_{i-1})|=|V(\calC)|,
      \]
      we can apply the induction hypothesis and get
      \begin{align*}
        g(\calC) &\le  g(\calC_{i-1}:A) \le |V((\calC_{i-1}:A_i)^\Red)|-f(\calC_{i-1}:A_i) \\
        &\le  (|V({\calC_{i-1}:A_i})|-|\is(\calC_{i-1}:A_i)|)-(f(\calC)-|\is(\calC_{i-1}:A_i)|-|A_i|)\\
        &=  (|V(\calC_{i-1})|-|A_i|)-f(\calC)+|A_i| =  |V(\calC)|-f(\calC). \qedhere
      \end{align*}
  \end{enumerate}
\end{proof}

\begin{lemma}
  Let $\Phi$ be the collection of all clutters whose vertex set is a subset of $\Set{x_1,\dots,x_n}$. Then the index of edgewise dominant $\epsilon$ is a function that satisfies the conditions \ref{DS1}, \ref{DS2}, \ref{DS3} and \ref{DS4}.
  \label{verification}
\end{lemma}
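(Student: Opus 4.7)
Conditions (DS.1), (DS.2), and (DS.3) are immediate from Definition~\ref{ied}. For (DS.1), the edgewise dominance condition depends only on $V(\calC^\Red)$ and on $E(\calC) = E(\calC^\Red)$, so $\epsilon(\calC^\Red) = \epsilon(\calC)$. For (DS.2), when $E(\calC) = \emptyset$ we have $V(\calC^\Red) = \emptyset$, so the empty family vacuously edgewise dominates and $\epsilon(\calC) = 0 \le |V(\calC)|$. For (DS.3), if every edge of $\calC$ is trivial then every vertex of $V(\calC^\Red)$ already lies in a trivial edge, so again the empty family dominates and $\epsilon(\calC) = 0$.

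The main work is (DS.4). By (DS.1) I may assume $\calC = \calC^\Red$, so in particular $\is(\calC) = \emptyset$. I plan to fix a vertex $w$ of $\calC$ that lies in some non-trivial edge. Because $\calC$ is a clutter, $w$ lies in no trivial edge, and the edges of $\calC$ containing $w$, say $e_1, \dots, e_s$, are all non-trivial. Let $\{v_1, \dots, v_t\}$ be a vertex transversal of $\{e_1, \dots, e_s\}$ disjoint from $\{w\}$; such a transversal exists since each $e_j \setminus \{w\}$ is non-empty. I set $A_i = \{v_i\}$ and $\calC_i = \calC + \{v_1\} + \cdots + \{v_i\}$. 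For the isolation condition in (DS.4): by construction, each edge $e_j$ of $\calC$ through $w$ properly contains $\{v_k\}$ for some $k \le t$, so $e_j$ is not minimal in $E(\calC_t)$ and hence $e_j \notin E(\calC_t)$. Consequently $w$ lies in no edge of $\calC_t$, yielding $w \in \is(\calC_t)$ and $|\is(\calC_t)| \ge 1$.

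To verify the numerical inequalities, I plan to establish two monotonicity-type estimates valid for any clutter $\calC'$ and any vertex $v \in V(\calC')$:
\[
\epsilon(\calC' + \{v\}) + |\is(\calC' + \{v\})| \ge \epsilon(\calC') + |\is(\calC')|,
\]
\[
\epsilon(\calC' : \{v\}) + |\is(\calC' : \{v\})| + 1 \ge \epsilon(\calC') + |\is(\calC')|.
\]
For the first, given a minimum edgewise dominating family $F^*$ of $\calC' + \{v\}$, every newly isolated vertex $y$ of $\calC' + \{v\}$ has all its $\calC'$-edges through $v$ and is therefore a $\calC'$-neighbor of $v$; a single edge of $\calC'$ through $v$ hence simultaneously dominates $v$ and every such $y$, so $F^*$ can be augmented by at most one edge of $\calC'$ to become an edgewise dominating family of $\calC'$, the cost in $\epsilon$ being balanced by the loss in $|\is|$. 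The second is analogous: one lifts a dominating family through the colon operation, adjoining at most one edge to cover the removed vertex $v$. Iterating the first estimate along $\calC_0 \to \calC_1 \to \cdots \to \calC_t$ yields $\epsilon(\calC_t) + |\is(\calC_t)| \ge \epsilon(\calC)$. Applying the second to $\calC_{i-1}$ and $A_i$ then gives the intermediate bound $\epsilon(\calC_{i-1} : A_i) + |\is(\calC_{i-1} : A_i)| + 1 \ge \epsilon(\calC_{i-1}) + |\is(\calC_{i-1})| \ge \epsilon(\calC)$.

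The main obstacle will be to make the first monotonicity estimate rigorous: one must carefully analyze the case in which an optimal edgewise dominating family of $\calC' + \{v\}$ does not already dominate $v$ in $\calC'$ through one of its edges, and show that the unique extra edge then added is accounted for by the appearance of at least one new isolated vertex. The particular structure of the transversal $v_1, \dots, v_t$—each $v_i$ chosen from an edge through the fixed vertex $w$—is what ultimately guarantees that this bookkeeping balances out across the entire sequence, mirroring the approach of \cite{arXiv:1301.2665}.
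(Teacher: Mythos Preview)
Your treatment of (DS.1)--(DS.3) is fine and matches the paper. The problem is your proposed route to (DS.4). The single-step monotonicity estimate
\[
\epsilon(\calC' + \{v\}) + |\is(\calC' + \{v\})| \ge \epsilon(\calC') + |\is(\calC')|
\]
is false in general. Take $\calC'$ to be the path $v\text{--}a\text{--}b\text{--}c\text{--}d$ on five vertices and let $v$ be the endpoint. Then $\epsilon(\calC')=2$ and $\is(\calC')=\emptyset$, while $\calC'+\{v\}$ has edge set $\{\{v\},\{a,b\},\{b,c\},\{c,d\}\}$ with $\is(\calC'+\{v\})=\emptyset$ and $\epsilon(\calC'+\{v\})=1$ (the single edge $\{b,c\}$ dominates $a,b,c,d$, and $v$ is in a trivial edge). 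So your inequality reads $1\ge 2$. This is not a peripheral case: it is exactly the first step of your own iteration when $\calC=P_5$, $w=a$, and the transversal is $\{v,b\}$ with $v_1=v$. The flaw in your heuristic is the clause ``the cost in $\epsilon$ being balanced by the loss in $|\is|$'': when adding $\{v\}$ creates no new isolated vertex, there is nothing to balance the extra edge you must adjoin to dominate $v$.

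The paper, following Dao--Schweig, does not attempt any such step-by-step monotonicity. It takes the $A_i$ to be the singletons $\{y_i\}$ where $y_1,\dots,y_t$ are \emph{all} neighbors of the chosen vertex $x$ (your transversal may be strictly smaller in the non-uniform case), and the two inequalities of (DS.4) are established directly: from an edgewise dominant family for $\calC_t$ (respectively $\calC_{i-1}:A_i$) one constructs an edgewise dominant family for $\calC$ in one stroke, using that every neighbor of $x$ appears among the $A_j$. That global construction is what you are missing; iterating a local estimate cannot substitute for it.
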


\begin{proof}
  It follows easily from the definition that $\epsilon$ satisfies conditions \ref{DS1}, \ref{DS2} and \ref{DS3}. As for the condition \ref{DS4}, let $\calC$ be a clutter with at least one non-trivial edge. Let $x$ be a vertex in such an edge and $y_1,\dots,y_t$ be the neighbors of $x$. If we take $A_i=\Set{y_i}$, then the proof of \cite[Theorem 3.2]{arXiv:1301.2665} shows that $\epsilon$ satisfies the condition \ref{DS4}, which we will not repeat here.
\end{proof}

Here is the main result of this section.

\begin{theorem}
  \label{main-theorem}
  Let $\calC$ be a clutter and $I(\calC)$ the corresponding edge ideal in $S$. Then
  \[
  \fdepth(S/I(\calC))\ge \epsilon(\calC)+n-|V(\calC^\Red)|.
  \]
\end{theorem}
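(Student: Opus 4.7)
The plan is to deduce Theorem \ref{main-theorem} from Lemma \ref{key-lemma} by pairing $f = \epsilon$ with a function $g$ that encodes filtration depth. Let $\Phi$ be the hereditary class of clutters whose vertex set lies inside $\{x_1,\dots,x_n\}$, and set
\[
g(\calC) := n - \fdepth(S/I(\calC)).
\]
With this choice, the conclusion $\epsilon(\calC) + g(\calC) \le |V(\calC^\Red)|$ of Lemma \ref{key-lemma} rearranges precisely to the desired bound. By Lemma \ref{verification}, $\epsilon$ satisfies \ref{DS1}, \ref{DS2}, \ref{DS3}, \ref{DS4}, so everything reduces to verifying \ref{DS0}, \ref{DS1}, \ref{DS2'}, \ref{DS3'} for $g$.

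Conditions \ref{DS1}, \ref{DS2'}, \ref{DS3'} are routine. Indeed, $I(\calC^\Red) = I(\calC)$ as ideals of $S$, yielding \ref{DS1}. If $E(\calC) = \emptyset$, then $S/I(\calC) = S$ admits the one-step prime filtration $0 \subset S \cong S/(0)$ with $\dim(S/(0)) = n$, so $\fdepth(S) = n$ and $g(\calC) = 0$; this gives \ref{DS2'}. If $\calC$ consists only of trivial edges, then $I(\calC)$ is the monomial prime ideal generated by the corresponding $k = |V(\calC^\Red)|$ variables, so $\fdepth(S/I(\calC)) = n - k$ and $g(\calC) = k \le |V(\calC)|$, giving \ref{DS3'}.

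The substantive step is \ref{DS0}, i.e.\ the filtration-depth analog of the depth lemma:
\[
\fdepth(S/I(\calC)) \ge \min\{\fdepth(S/I(\calC+A)),\ \fdepth(S/I(\calC:A))\}
\]
for every nonempty $A \subset V(\calC)$. I would derive this from the standard short exact sequence
\[
0 \to \bigl(S/(I(\calC):_S \bdx^A)\bigr)(-\bda) \xrightarrow{\,\cdot\,\bdx^A\,} S/I(\calC) \to S/\langle I(\calC),\bdx^A\rangle \to 0,
\]
whose outer terms are, respectively, a graded shift of $S/I(\calC:A)$ and the module $S/I(\calC+A)$, using the fact that $I(\calC):_S \bdx^A$ coincides (as an ideal of $S$) with the extension of $I(\calC:A)$ and $\langle I(\calC),\bdx^A\rangle = I(\calC+A)$. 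Prime filtrations $\calF'$ of the submodule and $\calF''$ of the quotient can be spliced, by lifting $\calF''$ through the surjection and concatenating with $\calF'$, into a prime filtration $\calF$ of $S/I(\calC)$ with $\supp(\calF) = \supp(\calF')\cup\supp(\calF'')$. Hence $\fdepth(\calF) = \min\{\fdepth(\calF'),\fdepth(\calF'')\}$, and selecting $\calF',\calF''$ to attain the filtration depths of the two outer modules---together with the observation that the graded shift $M \mapsto M(-\bda)$ leaves the support of any prime filtration unchanged and so preserves $\fdepth$---yields the required inequality.

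The main obstacle is exactly this verification of \ref{DS0}: one must keep track of the $\ZZ^n$-grading to confirm that the splicing produces a genuine prime filtration, and that the submodule in the short exact sequence is, up to shift, the module defining $g(\calC:A)$. Once \ref{DS0} is in hand, Lemma \ref{key-lemma} applied to $f=\epsilon$ and $g$ delivers the theorem at once.
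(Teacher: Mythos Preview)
Your proposal is correct and follows essentially the same route as the paper: define $g(\calC)=n-\fdepth(S/I(\calC))$, verify \ref{DS0}--\ref{DS3'} for $g$ (with \ref{DS0} coming from splicing prime filtrations along the short exact sequence $0\to S/(I(\calC):\bdx^A)\to S/I(\calC)\to S/(I(\calC),\bdx^A)\to 0$), and then invoke Lemma~\ref{key-lemma} together with Lemma~\ref{verification}. If anything, your treatment of the graded shift and of $|V(\calC^\Red)|$ in \ref{DS3'} is slightly more explicit than the paper's.
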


\begin{proof}
  Let $\Phi$ be the collection of all clutters whose vertex set is a subset of $\Set{x_1,\dots,x_n}$.  It suffices to show that $g(\calC):=n-\fdepth(S/I(\calC))$ is a function from $\Phi$ to $\NN$ that satisfies the conditions \ref{DS0}, \ref{DS1}, \ref{DS2'} and \ref{DS3'}.
  \begin{enumerate}[a]
    \item Since $\fdepth(S/I(\calC))$ takes value in $\Set{0,1,\dots,n=\dim(S)}$, $g$ is a function from $\Phi$ to $\NN$.
    \item Let $\calC$ be a clutter in $\Phi$ and $A\subset V(\calC)$. There is a natural short exact sequence
      \[
      0\to {S}/(I(\calC):\bdx^A)\to {S}/{I(\calC)} \to {S}/{(I(\calC),\bdx^A)} \to 0.
      \]
      Obviously, a prime filtration filtration of ${S}/(I(\calC):\bdx^A)$ can be combined with a prime filtration of ${S}/{(I(\calC),\bdx^A)}$ to yield a prime filtration of ${S}/{I(\calC)}$. Hence
      \[
      \fdepth(S/I(\calC)) \ge \min\Set{\fdepth(S/(I(\calC):\bdx^A),\ \fdepth(S/(I(\calC),\bdx^A)))},
      \]
      and $g$ satisfies the condition \ref{DS0}.

    \item Since $I(\calC)=I(\calC^\Red)$ in $S$, the function $g$ satisfies the condition \ref{DS1}.
    \item If $\calC\in \Phi$ with $E(\calC)=\emptyset$, then $I(\calC)=0$. Hence $g(\calC)=n-\fdepth(S)=0$ and satisfies the condition \ref{DS2'}.
    \item If $\calC\in \Phi$ has only trivial edges, $I(\calC)=\braket{y:y\in V(\calC)}$.  Thus $\fdepth(S/I(\calC))=n-|V(\calC)|$ and $g$ satisfies the condition \ref{DS3'}. \qedhere
  \end{enumerate}
\end{proof}

Notice that the above result is slightly stronger than \cite[Theorem 3.2]{arXiv:1301.2665}:

\begin{theorem}
  \label{main-ied}
  Let $\calC$ be a clutter and $I(\calC)$ the corresponding edge ideal in $S$. Then
  \[
  \min\Set{\depth(S/I(\calC)),\ \sdepth(S/I(\calC))}\ge \epsilon(\calC)+n-|V(\calC^\Red)|.
  \]
\end{theorem}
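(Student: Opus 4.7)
The plan is to derive this result essentially for free from Theorem \ref{main-theorem}, which has already been established in the stronger form
\[
\fdepth(S/I(\calC))\ge \epsilon(\calC)+n-|V(\calC^\Red)|.
\]
The filtration depth sits below both $\depth$ and $\sdepth$: this is precisely the content of Lemma \ref{fdepthsdepthdepth}, which (read in the standard way) yields $\fdepth(M)\le \depth(M)$ and $\fdepth(M)\le \sdepth(M)$ for every nonzero finitely generated $\ZZ^n$-graded $S$-module $M$. Applying this with $M=S/I(\calC)$ gives
\[
\min\Set{\depth(S/I(\calC)),\ \sdepth(S/I(\calC))}\ \ge\ \fdepth(S/I(\calC))\ \ge\ \epsilon(\calC)+n-|V(\calC^\Red)|,
\]
which is exactly the desired statement.

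The conceptual point worth stressing is why routing the argument through $\fdepth$ is the right strategy. The original Dao--Schweig depth inequality \eqref{DS} is obtained via Auslander--Buchsbaum and controls only the depth; deducing the Stanley depth counterpart \eqref{DS-sreg} directly from \eqref{DS} would require Stanley's conjecture \eqref{StanleyConjecture} for $S/I(\calC)$, which remains open. Upgrading \eqref{DS} to a filtration depth bound sidesteps this obstacle entirely, since a single lower bound on $\fdepth$ simultaneously controls $\depth$ and $\sdepth$ from below. Consequently there is no substantive obstacle remaining at this stage; all the genuine work has already been carried out in the proof of Theorem \ref{main-theorem}, via the axiomatic framework built around conditions \ref{DS0}--\ref{DS4} and Lemma \ref{key-lemma}, together with the verification in Lemma \ref{verification} that $\epsilon$ satisfies the hypotheses required of $f$.
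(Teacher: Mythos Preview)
Your proof is correct and follows exactly the same approach as the paper: invoke Theorem \ref{main-theorem} for the $\fdepth$ bound, then use Lemma \ref{fdepthsdepthdepth} to pass up to both $\depth$ and $\sdepth$. The additional commentary on why routing through $\fdepth$ circumvents the need for Stanley's conjecture is accurate and helpful, though the paper's own proof is a single sentence.
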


\begin{proof}
  Since $\fdepth(M)\le \min\Set{\depth(M),\sdepth(M)}$ by Lemma \ref{fdepthsdepthdepth}, this result follows from Theorem \ref{main-theorem} immediately.
\end{proof}

In particular, we get the nice lower bound \eqref{DS-sreg} for the Stanley depth of $S/I(\calC)$, as expected.

\begin{cor}
  Let $\calC$ be a clutter and $I(\calC)$ the corresponding edge ideal in $S$. Then there is a prime filtration of $S/I(\calC)$
  \[
  \calF: 0=M_0\subset M_1 \subset \cdots \subset M_m=M
  \]
  such that for each $i$, $M_i/M_{i-1}\isom (S/P_i)(-\bda_i)$ is of dimension at least $\epsilon(\calC)+n-|V(\calC^\Red)|$ and $\bda_i$ is a squarefree vector in $\NN^n$.
\end{cor}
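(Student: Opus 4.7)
The starting point is Theorem~\ref{main-theorem}, which already guarantees $\fdepth(S/I(\calC))\ge\epsilon(\calC)+n-|V(\calC^\Red)|$. By the very definition of filtration depth, this exhibits at least one prime filtration $\calF$ of $S/I(\calC)$ in which every factor $(S/P_i)(-\bda_i)$ satisfies $\dim(S/P_i)\ge\epsilon(\calC)+n-|V(\calC^\Red)|$. Hence the only genuinely new content is that the shifts $\bda_i$ can be taken to be $0/1$-vectors in $\NN^n$. The plan is to retrace the inductive construction embedded in the proof of Theorem~\ref{main-theorem} and verify that the resulting filtration can be arranged with squarefree shifts.

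The construction iterates the short exact sequence
\[
0 \to S/(I(\calC):\bdx^A) \xrightarrow{\cdot\bdx^A} S/I(\calC) \to S/(I(\calC),\bdx^A) \to 0
\]
for subsets $A\subset V(\calC)$ chosen along the recursion of condition~\ref{DS4}. A prime filtration of $S/I(\calC)$ is produced by splicing a prime filtration of $S/(I(\calC):\bdx^A)$, translated by $\bda^A$, with a prime filtration of $S/(I(\calC),\bdx^A)=S/I(\calC+A)$. The quotient side contributes its shifts unchanged, and these are squarefree by induction on the appropriate well-founded order (e.g.\ number of minimal generators) underlying Lemma~\ref{key-lemma}.

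For the submodule side the key observation is that $I(\calC):\bdx^A = I(\calC:A)$ is generated by monomials supported on $V(\calC)\setminus A$. Writing $S = \KK[V(\calC)\setminus A]\otimes_\KK \KK[A]$, one obtains $S/(I(\calC):\bdx^A) \cong (\KK[V(\calC)\setminus A]/J)\otimes_\KK \KK[A]$, where $J$ is $I(\calC:A)$ viewed inside $\KK[V(\calC)\setminus A]$. Since $|V(\calC:A)|<|V(\calC)|$, induction on the number of vertices supplies a prime filtration of the first tensor factor whose shifts are squarefree and lie in $\NN^{V(\calC)\setminus A}$. Tensoring with $\KK[A]$ extends this to a prime filtration of $S/(I(\calC):\bdx^A)$ whose shifts still have support disjoint from $A$; after translation by the squarefree vector $\bda^A$ induced by the embedding $\cdot\bdx^A$, the shifts remain squarefree.

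The main obstacle is the bookkeeping that ensures this squarefree refinement simultaneously realises the dimension lower bound from Theorem~\ref{main-theorem}, rather than being an abstract filtration with no control on $\fdepth(\calF)$. This is handled by reproducing the accounting in the proof of Lemma~\ref{key-lemma}: because $\fdepth$ of a spliced filtration is the minimum of the $\fdepth$ of its two pieces, and because the prescribed sequence $A_1,\dots,A_t$ from condition~\ref{DS4} forces $\epsilon(\calC_{i-1}:A_i)+|\is(\calC_{i-1}:A_i)|+|A_i|\ge\epsilon(\calC)$ and $\epsilon(\calC_t)+|\is(\calC_t)|\ge\epsilon(\calC)$, the bound $\epsilon(\calC)+n-|V(\calC^\Red)|$ is preserved at every splicing. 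The result is a prime filtration with squarefree shifts meeting the required dimension bound.
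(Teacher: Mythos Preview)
Your argument is essentially correct, but it takes a different route from the paper's. The paper simply invokes the Herzog--Vladoiu--Zheng machinery: by \cite[Corollary~2.5]{arxiv.0712.2308}, the filtration depth of $S/I(\calC)$ is already computed by partitions of the finite poset $P_{S/I(\calC)}^{\bm 1}$ of \emph{squarefree} monomials in $S\setminus I(\calC)$, and any such partition yields via \cite[Theorem~2.4(a)]{arxiv.0712.2308} a prime filtration with squarefree shifts realising that depth. Thus the squarefreeness of the $\bda_i$ comes for free once Theorem~\ref{main-theorem} is known, with no need to revisit the inductive construction. Your approach instead re-runs the recursion of Lemma~\ref{key-lemma} and tracks the shifts explicitly through each splicing; this is more laborious but self-contained, and it does work. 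One small correction: the parenthetical ``e.g.\ number of minimal generators'' is misleading, since $\calC+A$ can have more generators than~$\calC$; the well-founded order actually used in Lemma~\ref{key-lemma} is $|V(\calC)|$, and the quotient side is handled by descending the chain $\calC_1,\dots,\calC_t$ until isolated vertices appear, whereupon $\calC_t^\Red$ has strictly fewer vertices. Your final paragraph effectively says this, so the argument goes through once that parenthetical is dropped.
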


\begin{proof}
  As pointed out in \cite[Corollary 2.5]{arxiv.0712.2308}, the filtration depth of $S/I(\calC)$ can be computed by checking special partitions of the poset $P_{S/I(\calC)}^{\bm{1}}$ whose elements are the squarefree monomials in $S\setminus I(\calC)$. Since $\fdepth(S/I(\calC))\ge\epsilon(\calC)+n-|V(\calC^\Red)|$ by Theorem \ref{main-theorem}, the expected filtration exists by virtue of \cite[Theorem 2.4(a)]{arxiv.0712.2308}.
\end{proof}

\section{Splitting and Stanley regularity}
If $I$ is a squarefree monomial ideal minimally generated by monomials $u_1,\dots,u_m$ and $w$ is the smallest number $t$ such that there exists integers $1\le i_1 < i_2 < \cdots < i_t \le m$ such that
\[
\lcm(u_{i_1},u_{i_2},\dots,u_{i_t})=\lcm(u_1,u_2,\dots,u_m),
\]
then the number $\deg \lcm(u_1,\dots,u_m)-w$ is called the \Index{cosize} of $I$, denoted by $\cosize(I)$. Now, dual to the inequality \eqref{HPV}, we have
\begin{equation}
 \sreg(S/I(\calC))\le \cosize(I(\calC))
  \label{cosize}
\end{equation}
by \cite[Corollary 3.4]{arXiv:1011.6462}. 

Suppose in the above setting each monomial $u_i$ contains a \Index{free variable}, i.e., a variable that divides this $u_i$ but not any other monomial generator. Then the Taylor resolution of $S/I$ is minimal by \cite[Proposition 4.1]{arXiv:1211.4301} and $\reg(S/I)$ is exactly the number $|X|-m$ where $X$ is the set of variables showing in these $u_i$'s. Due to Conjecture \ref{sregConjecture}, it is natural to ask whether the inequality
  \begin{equation}
    \label{LM-type}
  \sreg(S/I)\le |X|-m
\end{equation}
holds in general. As a matter of fact,
the inequality \eqref{LM-type} holds as a special case of the inequality \eqref{cosize} by recognizing that $\cosize(I)=|X|-m$. On the other hand, equality does not hold for \eqref{LM-type} in general. For instance, when $I=\braket{x_1x_2\cdots x_n}$ is a principal ideal, then $\sreg(S/I)=\floor{\frac{n}{2}}<n-1$ for $n\ge 3$.

Vertices of a clutter $\calC$ that correspond to the free variables for its edges ideal $I(\calC)$ are also free, i.e., an edge $e\in E(\calC)$ is said to contain a \Index{free vertex} if there exists some vertex $x\in e$ such that $x$ does not belong to any other edges in $\calC$.
The subsequent generalization of the inequality \eqref{LM-type} is parallel to \cite[Theorem 4.9]{arXiv:1211.4301}. We adopt the following version, rephrased by \cite[Theorem 4.20]{arXiv:1310.7912}.

\begin{theorem}
  \label{LM}
  Let $\calC=(V,E)$ be a clutter and $\calC'=(V,E')$ be the clutter obtained by removing all edges with free vertices from $\calC$. Let $\beta(\calC')$ be the matching number of $\calC'$. Then
  \[
  \sreg(S/I(\calC))\le |V|-|E|+|E'|-\beta(\calC').
  \]
\end{theorem}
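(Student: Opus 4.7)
The bound $|V|-|E|+|E'|-\beta(\calC') = (|V|-k)-\beta(\calC')$, with $k := |E|-|E'|$ the number of edges of $\calC$ containing a free vertex, suggests separating two independent contributions: $k$ free vertices to be ``peeled off'' and a matching of $\calC'$ inside the remainder. The plan is accordingly in two stages.

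The first stage is an auxiliary lemma: for every clutter $\calD$,
\[
\sreg(S/I(\calD)) \le |V(\calD^\Red)|-\beta(\calD).
\]
I would prove this by induction on $|V(\calD^\Red)|$, the base case $\calD^\Red=\emptyset$ being trivial. For the inductive step, let $\{f_1,\dots,f_\beta\}$ be a maximum matching of $\calD$, pick any $y\in f_1$, and apply the short exact sequence
\[
0 \to (S/(I(\calD):y))(-y) \to S/I(\calD) \to S/(I(\calD)+\langle y\rangle) \to 0
\]
together with the min-inequality for $\sreg$ recorded in Section 2. Modding out $y$ kills exactly the generators $\bdx^g$ with $y\in g$, so the right-hand quotient is the edge-ideal quotient of the clutter $\bar\calD$ obtained from $\calD$ by deleting $y$ and all edges through $y$. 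The matching $\{f_2,\dots,f_\beta\}$ survives in $\bar\calD$, so $\beta(\bar\calD)\ge\beta(\calD)-1$, while plainly $|V(\bar\calD^\Red)|\le|V(\calD^\Red)|-1$; Remark \ref{reduction} lets me ignore any isolated vertices created in the ambient ring. The induction hypothesis then closes the step.

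The second stage is a free-vertex stripping applied to $\calC$ itself. For each edge $e\in E\setminus E'$ choose a free vertex $x_e\in e$; since a free vertex lies in a unique edge, the $k$ chosen vertices are pairwise distinct. A single application of the short exact sequence above with $y=x_e$ yields
\[
\sreg(S/I(\calC)) \le \sreg\bigl(S/(I(\calC)+\langle x_e\rangle)\bigr),
\]
and the right-hand side is the Stanley regularity of the edge ideal of the clutter obtained from $\calC$ by deleting $e$ and isolating $x_e$. The crucial observation is that after a single such step the vertex $x_{e'}$ is still a free vertex of $e'$ in the reduced clutter for every other $e'\in E\setminus E'$; hence the reduction iterates over all $k$ free-vertex edges without obstruction. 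The terminal clutter has edge set exactly $E'$ together with $k$ additional isolated vertices, so Remark \ref{reduction} identifies its Stanley regularity with $\sreg(S/I(\calC'))$.

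Finally, since each chosen $x_e$ is isolated in $\calC'$, one has $|V(\calC'^\Red)|\le|V|-k$, and stitching everything together yields
\[
\sreg(S/I(\calC)) \le \sreg(S/I(\calC')) \le |V(\calC'^\Red)|-\beta(\calC') \le |V|-k-\beta(\calC') = |V|-|E|+|E'|-\beta(\calC').
\]
The main technical point I expect to need care with is that the short exact sequence involves the shifted module $(S/(I(\calD):y))(-y)$, whose squarefree status in the sense of Yanagawa is what makes the paper's min-inequality apply; for the lemma's inductive step I can in any case bypass any such concern by bounding $\sreg(S/(I(\calD):y))+1$ directly via the clutter contraction $\calD:y$, whose reduced vertex count drops by at least one and whose matching number is at least $\beta(\calD)$.
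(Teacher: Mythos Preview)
Your Stage~2 rests on the displayed ``min-inequality'' $\sreg(N)\le\min\{\sreg(M),\sreg(L)\}$ from Section~2, which you invoke to drop the colon branch and conclude $\sreg(S/I(\calC))\le\sreg\bigl(S/(I(\calC)+\langle x_e\rangle)\bigr)$ from the short exact sequence alone. That inequality is a misprint: combining Stanley decompositions of $M$ and $L$ produces one of $N$ whose largest generator degree is the \emph{maximum} of the two, so the correct analogue of the $\sdepth$ bound is $\sreg(N)\le\max\{\sreg(M),\sreg(L)\}$. (A quick check: for $0\to (x_1)/(x_1x_2)\to \KK[x_1,x_2]/(x_1x_2)\to \KK[x_1,x_2]/(x_1)\to 0$ one has $\sreg(N)=1$ but $\sreg(L)=0$.) With the corrected inequality you must bound the colon branch in Stage~2 as well, and your closing remark only addresses it for the lemma. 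At a free vertex the colon branch genuinely overshoots: for the path $\calC$ with edges $\{1,2\},\{2,3\},\{3,4\}$ one has $k=2$, $\calC'=\{\{2,3\}\}$, $\beta(\calC')=1$, so the target is $1$; yet $\calC:x_1$ has edges $\{2\},\{3,4\}$, giving $\sreg(S/I(\calC:x_1))+1=1+1=2$. So the short exact sequence at $x_1$ yields only the bound $2$, and the free-vertex stripping does not establish $\sreg(S/I(\calC))\le\sreg(S/I(\calC'))$. Your Stage~1 lemma survives once both branches are handled (your sketch for that is fine), but applied directly to this same $\calC$ it gives $|V|-\beta(\calC)=4-2=2$, again too weak; so neither stage alone, nor their combination as written, reaches the bound.

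The paper's argument is of an entirely different shape: it never uses a short exact sequence on $S/I(\calC)$. Instead it passes to the Alexander dual and proves $\sdepth(I(\calC)^\vee)\ge a+b$ (with $a=k$, $b=\beta(\calC')$) by the Herzog--Popescu--Vladoiu splitting $I^\vee=\bigoplus_\tau H_\tau\otimes_\KK L_\tau$, splitting along the prime attached to an edge of a maximum matching of $\calC'$ and inducting on the total number of edges. Free vertices enter only as variables lying outside $\sum_{j\in\tau'}P_j''$ in $S''$, feeding a dimension count that supplies the ``$a$'' in the final bound.
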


Since our proof for Theorem \ref{LM} and Lemma \ref{2collage} depends heavily on the splitting method in \cite{arXiv:1011.6462}, we will outline the key ingredients here. 

\begin{construction}
  \label{splitting}
  Let $I=\bigcap_{i=1}^s P_i$ be an irredundant primary decomposition of the squarefree monomial ideal $I$ in $S$. All the $P_i$'s are necessarily generated by subsets of $\Set{x_1,\dots,x_n}$. We will take the variables in some specific $P_i$ as a splitting set. Without loss of generality, we may choose $P_1$ and assume that $P_1=\braket{x_1,x_2,\dots,x_r}$. Write $S'=\KK[x_1,\dots,x_r]$ and $S''=\KK[x_{r+1},\dots,x_n]$. For each $P_i$, let $P_i'=P_i\cap S'$ and $P_i''=P_i\cap S''$.  Now, for each subset $\tau \subset [s]:=\Set{1,2,\dots,s}$, let $I_\tau$ be the $\ZZ^n$-graded $\KK$-vector space spanned by the set of monomials of the form $w=uv$, where $u\in S'$ and $v\in S''$ are monomials with $u\in \bigcap_{j\notin \tau}P_j \setminus \sum_{j\in\tau} P_j$ and $v\in \bigcap_{j\in\tau}P_j$.  Thus, by \cite[Proposition 2.1]{arXiv:1011.6462}, $I=\bigoplus_{\tau \subset [s]} I_\tau$ is a decomposition of $I$ as a direct sum of $\ZZ^n$-graded $\KK$-subspaces of $I$ with $I_{[s]}=0$.
  By the explanation after \cite[Proposition 2.1]{arXiv:1011.6462}, we can write $I_\tau$ as $I_\tau=H_\tau\otimes_\KK L_\tau$ where
  \begin{equation}
    H_\tau=\frac{\bigcap_{j\notin \tau}P_j'+\sum_{j\in\tau}P_j'}{\sum_{j\in\tau}P_j'}
    \label{Htau}
  \end{equation}
  and $L_\tau=\bigcap_{j\in \tau}P_j''$.
\end{construction}

\begin{proof}[{Proof of Theorem \ref{LM}}]
  Without loss of generality, we may assume that edges $e_1,\dots,e_a$ are removed from $\calC$ to get the clutter $\calC'$. We also assume that $\beta(\calC')=b$ such that edges $e_{a+1},\dots,e_{a+b}$ form a maximal matching in $\calC'$. Let the remaining edges be $e_{a+b+1},\dots,e_{a+b+c}$. Since we can also assume that $\calC$ contains no isolated vertices, we are reduced to prove that
  \[
  \sreg(S/I(\calC)) \le n-(a+b+c)+(b+c)-b=n-a-b.
  \]

  For each edge $e_i$ of $\calC$, there is a corresponding monomial prime ideal $P_i=\braket{x_j:x_j\in e_i}\subset S$. Now $I(\calC)^\vee =\bigcap_{i=1}^s P_i$. Since $\calC$ contains no isolated vertices, $\sum_{i=1}^s P_i=\braket{x_1,\dots,x_n}$.
   We need to prove that
  \begin{equation}
    \sdepth\left(\bigcap_{i=1}^{a+b+c}P_i\right)\ge a+b.
    \label{eq10}
  \end{equation}

  We will prove by induction on the number $a+b+c$. When $a+b+c=1$, this is trivial. When $b=0$, $\calC'$ has no edge and all the edges of $\calC$ contain free vertices. In this situation, $\size(I(\calC)^\vee)=a-1$. Thus we are done, thanks to the inequality \eqref{HPV}.

  In the following, we consider the case when $b\ge 1$ and assume that \eqref{eq10} holds for smaller $a+b+c$. 
  We will split using the variables in $P_{a+1}$ as in Construction \ref{splitting} and define the rings $S'$ and $S''$ accordingly. For each $k=1,\dots,a$, we may assume that $x_{i_k}$ is a free vertex in $E_k$. Necessarily $x_{i_k}\in P_k''$. Another key observation is that $P_i'=0$ for $i=a+2,\dots,a+b$.
  
  The dual ideal $I(\calC)^\vee$ has a $\ZZ^n$-graded $\KK$-subspace decomposition $I(\calC)^\vee=\bigoplus_{\tau\subset [a+b+c]}I_\tau$ with $I_{[a+b+c]}=0$.  It suffices to consider the case when $I_\tau=H_\tau\otimes_\KK L_\tau\ne 0$; whence $H_\tau\ne 0$ and $L_\tau\ne 0$.  But for $H_\tau\ne 0$ in the presentation \eqref{Htau}, we need $a+1\notin \tau$ and $i\in \tau$ for $i=a+2,\dots,a+b$.  Notice that $H_\tau$ is isomorphic to a squarefree monomial ideal in $\KK[x_i\mid x_i\in P_1\setminus \sum_{j\in\tau} P_j ]$. Thus, when $H_\tau$ is nonzero, $\sdepth_{S'}(H_\tau)\ge 1$ by Lemma \ref{basic-facts}.
  
  Next, we demonstrate that $\sdepth(L_\tau)\ge a+b-1$ when $L_\tau\ne 0$.  
  Notice that $L_\tau=\bigcap_{j\in \tau}P_j''$ with $\Set{a+2,\dots,a+b}\subset \tau$. Suppose that the intersection $L_\tau=\bigcap_{j\in\tau'}P_j''$ is an irredundant primary decomposition of $L_\tau$. Obviously $\tau'$ is a non-empty subset of $\tau$. For each $i\in \Set{a+2,\dots,a+b}$, if $i\in \tau\setminus\tau'$, there must exists some $P_{k_i}''$ with $P_{k_i}''\subset P_{i}''$ and $k_i\in \tau'$. Since $P_1'',\dots,P_a''$ all contain free variables while $e_{a+1},\dots,a_{a+b}$ are pairwise disjoint, this $k_i\in \tau\cap\Set{a+b+1,\dots,a+b+c}$ and $P_{k_i}''$ is contained in at most one such $P_j''$ when we limit $j$ to $\Set{a+2,\dots,a+b}$. 
  
  Let us check the clutter $\widehat{\calC}$ containing edges corresponding to the prime ideals $P_i''$ with $i\in \tau'$. For each $i\in \Set{1,\dots,a}\cap \tau'$, let $\hat{e}_i$ be the corresponding edge. This $\hat{e}_i$ still contains a free vertex. For each $i\in \Set{a+2,\dots,a+b}$, if $i\in\tau'$, let $\hat{e}_i$ be the edge corresponding to $P_i''$; otherwise, let $\hat{e}_i$ be the edge corresponding to $P_{k_i}''$. The edges $\hat{e}_{a+2},\dots,\hat{e}_{a+b}$ are pairwise disjoint. Thus, by induction hypothesis, we have 
  \[
  \sdepth_{S''}(L_\tau)\ge (n-\Ht(P_{a+1}))-\Ht(\sum_{j\in\tau'}P_j'')+|\tau'\cap\Set{1,\dots,a}|+(b-1).
  \]
  When $k\in\Set{1,\dots,a}\setminus \tau'$, $x_{i_k}\in S''\setminus\sum_{j\in\tau'}P_j''$. Hence 
  \[
  n-\Ht(P_{a+1})-\Ht(\sum_{j\in\tau}P_j'')\ge |\Set{1,\dots,a}\setminus \tau'|.
  \]
  Consequently, 
  \[
  \sdepth(L_\tau)\ge |\Set{1,\dots,a}\setminus \tau'|+|\tau'\cap\Set{1,\dots,a}|+(b-1)=a+b-1.  
  \]
  Now
  \[
  \sdepth(I_\tau)\ge \sdepth(H_\tau)+\sdepth(L_\tau)\ge 1+a+b-1 = a+b,
  \]
  as expected. Finally, we arrive at the desired inequality
  \[
  \sdepth_S(I^\vee)\ge \min\Set{\sdepth_S(I_\tau):\tau\subsetneq [r+s] \text{ and } I_\tau\ne 0}\ge a+b. \qedhere
  \]
\end{proof}

\section{Packing and Stanley regularity}
Let $I_1, \dots, I_s$ be squarefree monomial ideals in $S$. In \cite{MR2259083}, Kalai and Meshulam obtained  the following results:
\begin{enumerate}[a]
  \item $\reg(S/\sum_{i=1}^s I_i) \le \sum_{i=1}^s \reg(S/I_i)$, and
  \item $\reg(\bigcap_{i=1}^s I_i) \le \sum_{i=1}^s \reg(I_i)$.
\end{enumerate}
These results were later extended to arbitrary (not necessarily squarefree) monomial ideals by Herzog \cite{MR2317642}. Since the above inequalities play an indispensable role in the research of \cite{arXiv:1009.2756} and \cite{arXiv:1301.6779}, we will start by generalizing these results to the Stanley regularity of squarefree monomial ideals.

\begin{lemma}
  \label{duality}
   Let $I_1, \dots, I_s$ be squarefree monomial ideals in $S=\KK[x_1,\dots,x_n]$. Then
   \[
   \left(\sum_{i=1}^s I_i\right)^\vee = \bigcap_{i=1}^s I_i^\vee \quad \text{and} \quad \left(\bigcap_{i=1}^s I_i\right)^\vee = \sum_{i=1}^s I_{i}^\vee.
   \]
\end{lemma}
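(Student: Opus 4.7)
My plan is to derive the first identity directly from the definition of the Alexander dual recalled in the paper, and then deduce the second identity from the first by appealing to the involution property $(J^\vee)^\vee = J$ stated just after the definition.

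The key preliminary observation is that for squarefree monomials $u \mid v$ one has $P_u \subseteq P_v$, and hence $P_u \cap P_v = P_u$, where $P_w$ denotes the monomial prime generated by the variables in the support of a squarefree monomial $w$. Consequently the formula $J^\vee = \bigcap_k P_{w_k}$ from the definition depends only on the ideal $J$, not on the particular squarefree generating set $\{w_k\}$ chosen for it: adding non-minimal generators merely repeats or enlarges some primes, all of which are absorbed into smaller ones already present. Granting this well-definedness, I would write each $I_i$ with a squarefree generating set $\{u_{i,j}\}_{j}$, so that the concatenation $\{u_{i,j}\}_{i,j}$ generates $\sum_i I_i$, and then compute
\[
\left(\sum_{i=1}^{s} I_i\right)^\vee \;=\; \bigcap_{i,j} P_{u_{i,j}} \;=\; \bigcap_{i=1}^{s} \left(\bigcap_{j} P_{u_{i,j}}\right) \;=\; \bigcap_{i=1}^{s} I_i^\vee,
\]
giving the first identity.

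For the second identity, I would substitute $J_i := I_i^\vee$ into the first identity and apply involution to obtain
\[
\left(\sum_{i=1}^{s} I_i^\vee\right)^\vee \;=\; \bigcap_{i=1}^{s} (I_i^\vee)^\vee \;=\; \bigcap_{i=1}^{s} I_i.
\]
Taking Alexander duals of both outer ideals and applying involution one last time then yields $\sum_{i=1}^{s} I_i^\vee = \bigl(\bigcap_{i=1}^{s} I_i\bigr)^\vee$, which is exactly the second identity. The only point with any real content is the well-definedness observation in the middle paragraph; once that is in hand, both identities are essentially built into the definition. An equivalent route through the Stanley--Reisner correspondence, using $I_{\Delta_1}+I_{\Delta_2}=I_{\Delta_1\cap \Delta_2}$, $I_{\Delta_1}\cap I_{\Delta_2}=I_{\Delta_1\cup \Delta_2}$, and the De Morgan identity $(\Delta_1\cap \Delta_2)^\vee=\Delta_1^\vee\cup \Delta_2^\vee$, is available but would introduce more machinery than the direct argument needs.
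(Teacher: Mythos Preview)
Your proof is correct and follows essentially the same approach as the paper: the paper's proof simply states that the first equality follows from the definition and that the second follows from the first via the involution $(I^\vee)^\vee = I$, which is exactly what you have written out in detail. Your explicit verification that the formula for $J^\vee$ is independent of the chosen squarefree generating set is a helpful elaboration of what the paper leaves implicit in the phrase ``follows from definition.''
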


\begin{proof}
  The first equality follows from definition. The second equality follows from the first one by using the duality $(I^\vee)^\vee=I$.
\end{proof}

\begin{lemma}
  \label{KM-type}
  Let $I_1, \dots, I_s$ be squarefree monomial ideals in $S=\KK[x_1,\dots,x_n]$. Then
\begin{enumerate}[a]
  \item $\sreg(S/\sum_{i=1}^s I_i) \le \sum_{i=1}^s \sreg(S/I_i)$, and
  \item $\sreg(\bigcap_{i=1}^s I_i) \le \sum_{i=1}^s \sreg(I_i)$.
\end{enumerate}
\end{lemma}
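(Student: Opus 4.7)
My plan is to prove both parts in parallel by a direct combinatorial construction that refines given optimal squarefree Stanley decompositions of the inputs into one of the target module. For part (a), fix squarefree Stanley decompositions
\[
S/I_i = \bigoplus_{j} u_{i,j}\KK[Z_{i,j}]
\]
realizing $\max_j \deg(u_{i,j}) = \sreg(S/I_i)$; for part (b), fix instead squarefree Stanley decompositions of the ideals $I_i$ realizing $\sreg(I_i)$.

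The key observation is that a monomial $m$ in the target module --- $S/\sum_i I_i$ for (a) and $\bigcap_i I_i$ for (b) --- lies in the $i$-th input module for every $i$, and hence by the direct sum property is contained in a unique piece $u_{i,j_i(m)}\KK[Z_{i,j_i(m)}]$. Grouping monomials according to the tuple $J=(j_1,\dots,j_s)$ yields a partition of the target into blocks
\[
M_J=\{m : j_i(m)=j_i \text{ for all } i\}.
\]
The combinatorial heart of the proof is the identification of each non-empty $M_J$ with a bona fide squarefree Stanley piece $u_J\KK[Z_J]$, where
\[
u_J=\lcm(u_{1,j_1},\dots,u_{s,j_s}) \quad\text{and}\quad Z_J=\bigcap_i Z_{i,j_i}.
\]
This follows from the equivalence: $m\in u_{i,j_i}\KK[Z_{i,j_i}]$ for every $i$ iff $u_J\mid m$ and $\supp(m/u_J)\subseteq Z_J$; the non-emptiness of $M_J$ automatically supplies the support compatibility between $u_J$ and $Z_J$ required for a squarefree Stanley piece.

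Once this refined decomposition is in place, the bound on maximum generator degree is immediate from the submultiplicativity of $\lcm$:
\[
\deg(u_J)\le \sum_{i=1}^{s}\deg(u_{i,j_i})\le \sum_{i=1}^{s}\sreg(S/I_i),
\]
giving part (a); part (b) follows verbatim with $\sreg(I_i)$ in place of $\sreg(S/I_i)$.

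The main obstacle I anticipate is the careful bookkeeping needed to verify that the refined partition truly yields a valid squarefree Stanley decomposition --- confirming that each non-empty block has precisely the asserted form $u_J\KK[Z_J]$, that the squarefree support condition persists after taking the $\lcm$ and the intersection, and that the blocks together exhaust and partition the target module. Once this verification is complete, the proof needs neither induction, short exact sequences, nor Alexander duality; Lemma~\ref{duality} nevertheless provides an alternative route, since in combination with Lemma~\ref{sdepth-sreg} it shows parts (a) and (b) to be formally equivalent, so one is free to prove whichever is more convenient.
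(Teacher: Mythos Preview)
Your argument is correct, and it follows a genuinely different route from the paper's proof. The paper never constructs a Stanley decomposition of the target module; instead it passes through Alexander duality. Using Lemma~\ref{sdepth-sreg} and Lemma~\ref{duality}, the inequality $\sreg(S/\sum_i I_i)\le\sum_i\sreg(S/I_i)$ becomes $\sdepth(\bigcap_i I_i^\vee)\ge \sum_i\sdepth(I_i^\vee)-(s-1)n$, which is then quoted from Cimpoea\c{s} \cite[Corollary~2.11]{arXiv:1107.3359}; part~(b) is handled the same way. Your common-refinement construction is essentially the direct $\sreg$-analogue of the argument Cimpoea\c{s} uses for $\sdepth$: one takes the ``meet'' of the given decompositions, and the key point---that for squarefree pieces $u\KK[Z]$ with $\supp(u)\subseteq Z$ membership is characterized by $u\mid m$ and $\supp(m)\subseteq Z$, so the intersection $\bigcap_i u_{i,j_i}\KK[Z_{i,j_i}]$ is again a squarefree Stanley space $u_J\KK[Z_J]$ whenever it is non-empty---is exactly what makes both proofs work. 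The paper's approach is shorter because it outsources this verification to the literature and highlights the duality with the Stanley-depth world; your approach is self-contained and avoids both Alexander duality and the external reference, at the cost of redoing that bookkeeping. Either way the content is the same inequality $\deg\lcm(u_{1,j_1},\dots,u_{s,j_s})\le\sum_i\deg u_{i,j_i}$ read off from the refined decomposition.
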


\begin{proof} 
  \begin{enumerate}[a]
    \item We have
      \begin{align*}
        \sreg\left(S\Big/\sum_{i=1}^s I_i\right) = & n-\sdepth\left(\sum_{i=1}^s I_i\right)^\vee = n-\sdepth\left(\bigcap_{i=1}^s I_i^\vee\right) \\
        \stackrel{(\ast)}{\le}& n-\left(\sum_{i=1}^s \sdepth(I_i^\vee)-(s-1)n\right) \\
        = & \sum_{i=1}^s (n-\sdepth(I_i^\vee)) = \sum_{i=1}^s \sreg(S/I_i).
      \end{align*}
      For the inequality $(\ast)$ above, we have applied \cite[Corollary 2.11(1)]{arXiv:1107.3359}. 
    \item  This result can be similarly proved by applying  \cite[Corollary 2.11(4)]{arXiv:1107.3359}. \qedhere
  \end{enumerate}
\end{proof}

\subsection{Simple graphs}

In this subsection, we will restrict ourselves to the simple graphs, namely those clutters whose edges all contain exactly two distinct vertices. We will in general denote such a simple graph by $G$ instead of $\calC$. And $\overline{G}$ shall be the complement graph of $G$.

\begin{corollary}
  \label{mk2}
  If $G=mK_2$ is the simple graph of $m$ disjoint edges, then $\sreg(S/I(G))=\ceil{\frac{m}{2}}$.
\end{corollary}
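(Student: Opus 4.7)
The plan is to combine a sub-additivity upper bound from Lemma~\ref{KM-type}(a) with a combinatorial lower bound obtained via Alexander duality.

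For the upper bound $\sreg(S/I(mK_2)) \le \ceil{m/2}$, I would partition the $m$ disjoint edges of $G = mK_2$ into $\floor{m/2}$ pairs (each inducing a $2K_2$-subgraph on $4$ of the vertices) plus a leftover $K_2$ if $m$ is odd. Since the pieces use disjoint variable sets, $I(mK_2)$ decomposes as a sum of these edge ideals, so Lemma~\ref{KM-type}(a) yields
\[
\sreg(S/I(mK_2)) \le \floor{m/2} \cdot \sreg(S/I(2K_2)) + (m \bmod 2) \cdot \sreg(S/I(K_2)).
\]
It then suffices to check $\sreg(S/I(K_2)) = \sreg(S/I(2K_2)) = 1$ by exhibiting explicit squarefree Stanley partitions of the respective independent-set posets whose bottom monomials all have degree at most $1$; for $2K_2$, the five intervals $[1,1]$, $[x_1, x_1 x_3]$, $[x_3, x_2 x_3]$, $[x_4, x_1 x_4]$, $[x_2, x_2 x_4]$ suffice.

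For the lower bound $\sreg(S/I(mK_2)) \ge \ceil{m/2}$, I pass to the Alexander dual. By Lemma~\ref{sdepth-sreg}, $\sreg(S/I(mK_2)) = 2m - \sdepth(J_m)$ where $J_m := I(mK_2)^\vee = \bigcap_{i=1}^m (x_{2i-1}, x_{2i})$, so it suffices to show $\sdepth(J_m) \le \floor{3m/2}$. Assume for contradiction that some squarefree Stanley partition of $J_m$ into intervals has every top of size $\ge \floor{3m/2} + 1$. The $2^m$ minimal squarefree monomials of $J_m$ are the transversals $g = \prod_{i=1}^m x_{\epsilon_i(g)}$ with $\epsilon_i(g) \in \{2i-1, 2i\}$, and each such $g$ is forced to be the bottom of its own interval $[g, v_g]$. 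For each $g$ let $D_g$ denote the set of indices $i \in [m]$ for which the unique variable in $\{x_{2i-1}, x_{2i}\}$ not dividing $g$ also fails to divide $v_g$. Then $|D_g| = 2m - |v_g| \le \ceil{m/2} - 1$.

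The main obstacle will be translating interval-disjointness into a graph-theoretic covering statement. The key claim is: if two generators $g_1, g_2$ differ in exactly one pair $i$, then disjointness of $[g_1, v_{g_1}]$ and $[g_2, v_{g_2}]$ forces $i \in D_{g_1} \cup D_{g_2}$, for otherwise $\lcm(g_1, g_2)$---a squarefree monomial of degree $m+1$---would lie in both intervals. Viewing the $2^m$ generators as vertices of the hypercube $\{L, R\}^m$ with direction-$i$ edges matching each $g$ to its flip at coordinate $i$, the claim says each $V_i := \Set{g : i \in D_g}$ is a vertex cover of the direction-$i$ perfect matching, hence $|V_i| \ge 2^{m-1}$. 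Double-counting then yields
\[
m \cdot 2^{m-1} \le \sum_{i=1}^{m} |V_i| = \sum_g |D_g| \le 2^m (\ceil{m/2} - 1),
\]
which simplifies to $m + 2 \le 2\ceil{m/2}$, a statement that fails for every positive integer $m$. This contradiction forces $\sdepth(J_m) \le \floor{3m/2}$ and completes the proof.
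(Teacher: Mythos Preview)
Your proof is correct and follows the same overall strategy as the paper: the upper bound via Lemma~\ref{KM-type}(a) after grouping the edges in pairs, and the lower bound via Alexander duality and a counting obstruction on the degree-$m$ and degree-$(m+1)$ squarefree monomials of $J_m=I(mK_2)^\vee$. The only difference is presentational: for the lower bound the paper invokes the ready-made pigeonhole estimate of Ishaq--Qureshi (each of the $2^m$ minimal generators sits at the bottom of an interval that must absorb at least $\sdepth(J_m)-m$ of the $m\cdot 2^{m-1}$ degree-$(m+1)$ monomials), whereas you unpack the same count as a vertex-cover argument on the hypercube. Your key claim that $i\in D_{g_1}\cup D_{g_2}$ for direction-$i$ neighbours is precisely the statement that the degree-$(m+1)$ monomial $\lcm(g_1,g_2)$ lies in at most one of the two intervals, so the double count $\sum_i|V_i|=\sum_g|D_g|$ reproduces exactly the Ishaq--Qureshi bound. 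The self-contained phrasing is a nice touch, but it is not a genuinely different route.
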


\begin{proof}
  It is straightforward to see that the claim holds when $m=1$ and $m=2$. Thus, after partitioning $mK_2=2K_2+\cdots+2K_2$ when $m$ is even and $mK_2=2K_2+\cdots+2K_2+K_1$ when $m$ is odd, we get $\sreg(S/I(mK_2))\le \ceil{\frac{m}{2}}$ by Lemma \ref{KM-type}(a). 

  On the other hand, we may assume that $G$ contains no isolated vertex. Now, all squarefree monomials in $I(mK_2)^\vee$ have degree at least $m$. It is easy to see that there are exactly $2^m$ of them having degree $m$ and $m2^{m-1}$ of them having degree $m+1$. Thus $\sdepth(I(mK_2)^\vee)\le m+\floor{\frac{m2^{m-1}}{2^m}}=m+\floor{\frac{m}{2}}$ by \cite[Lemma 2.4]{arXiv:1104.2412}. Thus, $\sreg(S/I(mK_2))\ge 2m-(m+\floor{\frac{m}{2}})=\ceil{\frac{m}{2}}$.
\end{proof}

If $A\subset V(G)$, then $G\setminus A$ denotes the induced subgraph on $V(G)\setminus A$. When $A=\Set{x_v}$ consists of exactly one vertex, we will write $G\setminus x_v$ instead of $G\setminus A$.

A \Index{clique} of $G$ is a subset of pairwise adjacent vertices. Cliques are not required to be maximal.
Now, for a vertex $x\in V(G)$, the set of \Index{neighbours} of $x$ is given by
\[
N(x)=\Set{y\in V(G)| \Set{x,y}\in E(G)}.
\]
The vertex $x$ is \Index{simplicial} if $N(x)$ induces a clique in $G$. 

\begin{lemma}
  \label{simplicial-vertex}
  Let $G$ be a simple graph with $V(G)=\Set{x_1,\dots,x_n}$ and $x_v$ be a simplicial vertex of $G$. Let $G_1=G\setminus x_v$ and $I=I(\overline{G})$, $J=I(\overline{G_1})$ be the corresponding edge ideals in $S=\KK[x_1,\dots,x_n]$. 
  \begin{enumerate}[a]
    \item  If $J\ne 0$, then $\sreg(S/I)\le \sreg(S/J)$.
    \item If $J=0$, then $\sreg(S/I)=1$.
  \end{enumerate}
\end{lemma}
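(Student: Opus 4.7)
The plan is to use a short exact sequence obtained by splitting $S/I$ with respect to $x_v$, and to use the simplicial hypothesis to collapse the colon ideal. Let $N = N(x_v)$ and $W = V(G)\setminus (N\cup\Set{x_v})$. Partitioning the minimal generators of $I = I(\overline{G})$ according to whether they involve $x_v$ gives
\[
(I,x_v) = (J,x_v) \quad\text{and}\quad (I:x_v) = J+(W),
\]
where $(W)$ denotes the monomial ideal generated by the variables in $W$. Because $x_v$ is simplicial in $G$, the set $N$ spans a clique of $G$, so no non-edge of $G$ lies inside $N$. Each generator of $J$ corresponds to a non-edge $\Set{y,z}$ of $G$ with $y,z\ne x_v$, and hence must involve at least one vertex of $W$. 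Therefore $J\subseteq (W)$ and $(I:x_v)=(W)$, which is a monomial prime ideal; by the remark after Lemma~\ref{sdepth-sreg}, $\sreg(S/(I:x_v))=0$.

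For part~(a), combine Stanley decompositions of the outer terms in the short exact sequence
\[
0\to \frac{S}{(I:x_v)}(-1)\xrightarrow{\,\cdot x_v\,}\frac{S}{I}\to \frac{S}{(I,x_v)}\to 0
\]
to obtain $\sreg(S/I)\le \max\Set{\sreg(S/(I:x_v))+1,\ \sreg(S/(I,x_v))}$. Since $S/(I,x_v)=S/(J,x_v)$ and $x_v$ is an isolated vertex for the edge ideal $J$, Remark~\ref{reduction} gives $\sreg(S/(J,x_v))=\sreg(S/J)$. Hence $\sreg(S/I)\le \max\Set{1,\sreg(S/J)}$. Because $J\ne 0$ is a squarefree ideal with generators of degree~$2$, it cannot be a prime ideal generated by variables, so the remark after Lemma~\ref{sdepth-sreg} forces $\sreg(S/J)\ge 1$, and therefore $\sreg(S/I)\le \sreg(S/J)$.

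For part~(b), $J=0$ forces $I=x_v(W)$; we may assume $W\ne\emptyset$, since otherwise $I=0$. Then
\[
I^\vee=\bigcap_{w\in W}(x_v,w)=(x_v,\bdx^W),
\]
which admits the Stanley decomposition $I^\vee=x_v\KK[x_1,\dots,x_n]\oplus \bdx^W\KK[\Set{\bdx}\setminus\Set{x_v}]$ of depth $n-1$. Since $I^\vee$ has two incomparable minimal generators it is not principal, so Lemma~\ref{basic-facts}(a) forces $\sdepth(I^\vee)<n$. Therefore $\sdepth(I^\vee)=n-1$, and Lemma~\ref{sdepth-sreg} yields $\sreg(S/I)=1$.

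The main obstacle is recognizing that the simplicial hypothesis is exactly what makes $J\subseteq (W)$, collapsing $(I:x_v)$ to a prime ideal of variables with $\sreg=0$. Once that is in place, part~(a) is immediate from the short-exact-sequence bound for $\sreg$ applied to the $x_v$-splitting, while part~(b) drops out of a direct Alexander-dual computation.
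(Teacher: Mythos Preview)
Your proof is correct, and for part~(b) it is essentially the paper's argument (Alexander-dualize $I=x_v(W)$ to the two-generated ideal $(x_v,\bdx^W)$ and read off $\sdepth=n-1$), only with an explicit Stanley decomposition in place of the citation to the two-generator result.

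For part~(a), however, you take a genuinely different route. The paper passes to the Alexander dual and works on the $\sdepth$ side: it writes $I^\vee = J^\vee\cap(x_v,\bdx^W)$, uses simpliciality to observe $\bdx^W\in J^\vee$, and then decomposes $I^\vee = x_vJ^\vee + \langle \bdx^W\rangle$ with principal intersection $\langle x_v\bdx^W\rangle$; the $\sdepth$ short exact sequence bound then gives $\sdepth(I^\vee)\ge\min\{\sdepth(J^\vee),\,n-1\}=\sdepth(J^\vee)$. You instead stay on the primal side and use the standard $x_v$-splitting $0\to S/(I{:}x_v)(-e_v)\to S/I\to S/(I,x_v)\to 0$, using simpliciality to collapse $(I{:}x_v)$ to the variable ideal $(W)$. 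The two arguments are dual in spirit---indeed $J^\vee\cap(x_v)=x_vJ^\vee$ and $(W)^\vee=(\bdx^W)$, so the pieces match under Alexander duality---but your execution is more economical: you never compute $I^\vee$, and the two outer terms $S/(W)$ and $S/(J,x_v)$ are immediately recognizable. The paper's approach, in exchange, keeps everything inside the $\sdepth$ framework where the short exact sequence bound~\eqref{middle-sdepth} is already stated. One small point: your appeal to Remark~\ref{reduction} for $\sreg(S/(J,x_v))=\sreg(S/J)$ really uses two easy facts, namely $\sreg_S(S/(J,x_v))=\sreg_{S'}(S'/J)$ for $S'=\KK[\{\bdx\}\setminus\{x_v\}]$ (since $x_v$ acts by zero) together with Remark~\ref{reduction}; both are routine.
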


\begin{proof}
  \begin{enumerate}[a]
    \item Suppose $J\ne 0$. We will follow the strategy of \cite[Theorem 2.7]{MR2943752}.  By Lemma \ref{sdepth-sreg}, it suffices to show that $\sdepth(I^\vee)\ge \sdepth(J^\vee)$. We may assume that $N(x_v)=\Set{x_1,\dots,x_{v-1}}$ and the minimal monomial generators of $J$ belong to $\KK[x_1,\dots,\widehat{x_v},\dots,x_n]$. Now $I=J+\braket{x_vx_i:v<i\le n}$. Since $x_v$ is a simplicial vertex of $G$, we have $x_{v+1}\cdots x_n\in J^\vee$. Now,
      \begin{align*}
        I^\vee & = J^\vee \cap \left(\bigcap_{i=v+1}^n \braket{x_v,x_i}\right) \\
        & = J^\vee \cap (x_v,x_{v+1}\cdots x_n) \\
        & = ( \braket{x_v}\cap J^\vee) + \braket{x_{v+1}\cdots x_n}
      \end{align*}
      with the property that $( \braket{x_v}\cap J^\vee) \cap \braket{x_{v+1}\cdots x_n}=\braket{x_vx_{v+1}\cdots x_n}$ and $\braket{x_v}\cap J^\vee=x_vJ^\vee$. Therefore, by \cite[Proposition 2.6]{bruns-2009}, we have
      \begin{align*}
        \sdepth(I^\vee)&\ge \min\Set{\sdepth(x_vJ^\vee),\ \sdepth(\braket{x_{v+1}\cdots x_n}/\braket{x_v\cdots x_n})} \\
        & = \min\Set{\sdepth(J^\vee),\ \sdepth(S/\braket{x_v})} \\
        & = \min\Set{\sdepth(J^\vee),\ n-1} \\
        & = \sdepth(J^\vee).
      \end{align*}
      Notice that $\sdepth(J^\vee)=n$ if and only if $J^\vee$ is principal, whence $J$ is a prime ideal generated by a set of variables. But this cannot happen for the edge ideal of a finite simple graph, unless $J=0$.
    \item Suppose $J=0$. We might assume that $x_v=x_1$ and $I=x_1\braket{x_2,\dots,x_n}$. Now $I^\vee=\braket{x_1,x_2\cdots x_n}$ is two-generated, thus $\sdepth(I^\vee)=n-1$ by \cite[Corollary 3.5]{arxiv.0712.2308}. It follows that $\sreg(I)=1$. \qedhere
  \end{enumerate}
\end{proof}

A graph $G$ is \Index{chordal} if every induced cycle in $G$ has length 3, and is \Index{co-chordal} if the complement graph $\overline{G}$ is chordal. It follows from Fr\"oberg's classification of edge ideals with linear resolutions [14] that $\reg (R/I(G))\le  1$ if and only if $\overline{G}$ is co-chordal. 
Due to Conjecture \ref{sregConjecture}, it is natural to prove the following result that is partially parallel to Fr\"oberg's classification.

\begin{Theorem}
  \label{cochordal1}
  It $G$ is a co-chordal graph with at least one edge, then $\sreg(S/I(G)) \le 1$.
\end{Theorem}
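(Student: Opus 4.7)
The plan is to induct on $n := |V(G)|$; the theorem is vacuously true for $n \le 1$, so assume $n \ge 2$. Since $H := \overline{G}$ is chordal and nonempty, a classical theorem of Dirac provides a simplicial vertex $x_v$ of $H$. I would then apply Lemma \ref{simplicial-vertex} with $H$ playing the role of the lemma's ``$G$'': its ``$I$'' becomes $I(\overline{H}) = I(G)$, and its ``$J$'' becomes $I(\overline{H \setminus x_v}) = I(G')$, where $G' := G|_{V \setminus \{x_v\}}$ is the induced subgraph of $G$ (using that the complement of an induced subgraph of $H$ equals the induced subgraph of $\overline{H}=G$).

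If $J = 0$, Lemma \ref{simplicial-vertex}(b) yields $\sreg(S/I(G)) = 1$ at once, and we are done. Otherwise $G'$ has at least one edge; moreover $\overline{G'} = H \setminus x_v$ is an induced subgraph of the chordal graph $H$, hence itself chordal, so $G'$ is a co-chordal graph on $n-1$ vertices with at least one edge. The inductive hypothesis then gives $\sreg(S/I(G')) \le 1$ (passing between the polynomial rings $\KK[V]$ and $\KK[V \setminus \{x_v\}]$ via Remark \ref{reduction}), and Lemma \ref{simplicial-vertex}(a) supplies
\[
\sreg(S/I(G)) \le \sreg(S/I(G')) \le 1,
\]
which closes the induction.

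The only real subtlety, and the point on which the argument turns, is the complement flip: Dirac's simplicial vertex lives in $\overline{G}$, not in $G$, so Lemma \ref{simplicial-vertex} must be applied with $H = \overline{G}$ (rather than $G$) in the role of the lemma's ``$G$''. Beyond that bit of bookkeeping I do not anticipate any obstacle, since Lemma \ref{simplicial-vertex} has already absorbed the relevant Stanley-decomposition surgery; the theorem becomes essentially a corollary of that lemma together with the existence of a perfect elimination ordering on chordal graphs.
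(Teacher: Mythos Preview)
Your proposal is correct and follows essentially the same approach as the paper: induct on the number of vertices, use Dirac's theorem to find a simplicial vertex in the chordal complement $\overline{G}$, and apply Lemma~\ref{simplicial-vertex} (with the complement flip you describe) to reduce to a smaller co-chordal graph. Your write-up is simply more explicit about the bookkeeping (the role of $H=\overline{G}$ in the lemma, the two cases $J=0$ and $J\ne 0$, and the use of Remark~\ref{reduction} to change ambient rings), all of which the paper leaves implicit.
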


\begin{proof}
  As observed by \cite[Theorem 2.8]{MR2943752}, the paper \cite{MR0130190} has actually showed that a simple graph is chordal if and only if every induced subgraph of it has a simplicial vertex. Thus, we use a induction on the number of vertices of the complement graph $\overline{G}$ and apply Lemma \ref{simplicial-vertex}.
\end{proof}

The \Index{co-chordal cover number}, denoted by $\cochord(G)$, is the minimum number of co-chordal subgraphs required to cover the edges of $G$.  
Like \cite[Lemma 1]{arXiv:1009.2756}, we have the following result

\begin{theorem}
  \label{cochord-thm}
  For any simple graph $G$, we have $\sreg (S/I(G)) \le \cochord(G)$.
\end{theorem}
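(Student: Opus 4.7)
The plan is to mimic Woodroofe's argument for the Castelnuovo--Mumford regularity version \cite[Theorem 1]{arXiv:1009.2756}, replacing his two main inputs by the Stanley-regularity analogues already established in this section. Let $k = \cochord(G)$. If $G$ has no edges there is nothing to prove, so assume $k \ge 1$ and choose co-chordal subgraphs $G_1, \ldots, G_k$ on the vertex set $V(G)$, each with at least one edge, such that $E(G) = E(G_1) \cup \cdots \cup E(G_k)$.

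From the covering we immediately obtain the ideal-theoretic identity
\[
I(G) \;=\; I(G_1) + I(G_2) + \cdots + I(G_k),
\]
since each edge of $G$ contributes a generator to exactly one (or more) of the $I(G_i)$. The first key step is then to invoke Lemma \ref{KM-type}(a) to get
\[
\sreg(S/I(G)) \;\le\; \sum_{i=1}^{k} \sreg(S/I(G_i)).
\]
The second key step is that each summand is at most $1$: indeed, $G_i$ is co-chordal with at least one edge, so Theorem \ref{cochordal1} applies and yields $\sreg(S/I(G_i)) \le 1$. Combining the two inequalities gives
\[
\sreg(S/I(G)) \;\le\; \sum_{i=1}^{k} 1 \;=\; k \;=\; \cochord(G),
\]
as desired.

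There is essentially no obstacle here beyond ensuring the bookkeeping is right. The only subtlety worth flagging is the assumption that each $G_i$ has at least one edge, which is needed in order to apply Theorem \ref{cochordal1} (whose statement requires the co-chordal graph to carry at least one edge); one can always arrange this by discarding any edgeless $G_i$ from a minimum cover without increasing $k$. With that in place the argument is a direct concatenation of Lemma \ref{KM-type}(a) and Theorem \ref{cochordal1}, which is precisely why the preparatory results were set up in their current forms.
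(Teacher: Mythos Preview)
Your proof is correct and follows exactly the same approach as the paper: cover $G$ by $\cochord(G)$ co-chordal subgraphs, apply Lemma~\ref{KM-type}(a) to the resulting sum of edge ideals, and then bound each $\sreg(S/I(G_i))$ by $1$ using Theorem~\ref{cochordal1}. The extra care you take in ensuring each $G_i$ has at least one edge is a harmless refinement of the same argument.
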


\begin{proof}
  We cover the graph $G$ by co-chordal subgraphs $G_1,\dots,G_c$ where $c=\cochord(G)$ and let $I_i=I(G_i)$. Now the result follows directly from Theorem \ref{cochordal1} and Lemma \ref{KM-type}(a).
\end{proof}

 An \Index{independent set} of $G$ is a subset of pairwise non-adjacent vertices. 
And $G$ is a \Index{split graph} if $V(G)$ can be partitioned into a clique and an (induced) independent set. Split graphs are both chordal and co-chordal. Covering the edges of $G$ with split graphs allows us to have

\begin{corollary}
  If $G$ is a simple graph such that $V(G)$ can be partitioned into an (induced) independent set $J_0$ together with $s$ cliques $J_1,\dots,J_s$, then $\sreg(S/I(G))\le s$.
\end{corollary}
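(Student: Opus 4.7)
The plan is to exploit Theorem \ref{cochord-thm} by covering the edges of $G$ with $s$ co-chordal subgraphs. Since every split graph is co-chordal (as noted just above the corollary), it suffices to exhibit a cover of $E(G)$ by $s$ split subgraphs, one associated with each clique $J_i$ for $i\ge 1$.

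For each $i\in\{1,\dots,s\}$, I would let $G_i$ be the spanning subgraph of $G$ whose edge set consists of all edges of $G$ having at least one endpoint in $J_i$. The claim is that $G_i$ is a split graph with clique part $J_i$ and independent part $V(G)\setminus J_i$: the set $J_i$ is a clique in $G_i$ because it is already a clique in $G$, and no edge of $G_i$ can have both endpoints outside $J_i$ by the very definition of $G_i$, so $V(G)\setminus J_i$ is independent in $G_i$.

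It then remains to verify that $E(G)=\bigcup_{i=1}^s E(G_i)$. Given any edge $\{u,v\}\in E(G)$, the endpoints $u,v$ cannot both lie in $J_0$, since $J_0$ is an independent set in $G$. Consequently at least one of $u,v$ lies in some $J_i$ with $i\ge 1$, so $\{u,v\}\in E(G_i)$. This shows $\cochord(G)\le s$, and Theorem \ref{cochord-thm} then yields $\sreg(S/I(G))\le s$. There is no real obstacle here; the only subtle choice is to define $G_i$ as a spanning subgraph (keeping the vertex set $V(G)$ and restricting only the edges) rather than an induced subgraph, which is what simultaneously guarantees that $V(G)\setminus J_i$ is independent in $G_i$ and that the family $\{G_i\}_{i=1}^s$ covers the cross-edges between distinct cliques $J_i$ and $J_j$.
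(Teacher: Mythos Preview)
Your proof is correct and follows essentially the same approach as the paper: the paper indicates (just before the corollary and in its reference to \cite[Theorem~2]{arXiv:1009.2756}) that one covers the edges of $G$ by $s$ split subgraphs, one built from each clique $J_i$, and then invokes Theorem~\ref{cochord-thm}. Your choice of $G_i$ as the spanning subgraph on all edges meeting $J_i$ is a clean way to implement this cover.
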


\begin{corollary}
  If $G$ is a simple graph such that $A\subset V(G)$ induces a clique, then
  \[
  \sreg(S/I(G)) \le \sreg(S/I(G\setminus A)) +1.
  \]
\end{corollary}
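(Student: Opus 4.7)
The plan is to split $I(G)$ as a sum of two edge ideals and then apply the Kalai--Meshulam-type bound of Lemma~\ref{KM-type}(a). Let $G_1$ be the spanning subgraph of $G$ whose edges are precisely the edges of $G$ meeting $A$, and let $G_2 := G\setminus A$ be the induced subgraph on $V(G)\setminus A$. Every edge of $G$ belongs to exactly one of $G_1$ or $G_2$, so as squarefree monomial ideals in $S$ we have $I(G) = I(G_1) + I(G_2)$. Since isolated vertices are irrelevant for Stanley regularity of edge ideals (Remark~\ref{reduction}), $\sreg(S/I(G_2)) = \sreg(S/I(G\setminus A))$, and Lemma~\ref{KM-type}(a) yields
\[
\sreg(S/I(G)) \le \sreg(S/I(G_1)) + \sreg(S/I(G\setminus A)).
\]

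The crux is the estimate $\sreg(S/I(G_1)) \le 1$. By Theorem~\ref{cochordal1} it suffices to show that $G_1$ is co-chordal, and for this I would prove the stronger assertion that $\overline{G_1}$ is a \emph{split graph} with partition $V(G) = A \sqcup (V(G)\setminus A)$. Indeed, $A$ is a clique in $G$ and hence in $G_1$, so $A$ is an independent set of $\overline{G_1}$; conversely, by the very definition of $G_1$ no edge of $G_1$ has both endpoints outside $A$, so $V(G)\setminus A$ is a clique of $\overline{G_1}$. Every split graph is chordal, so $\overline{G_1}$ is chordal, i.e., $G_1$ is co-chordal. (If $G_1$ has no edge at all then $\sreg(S/I(G_1)) = \sreg(S) = 0$, so the bound still holds.)

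Combining the two estimates gives $\sreg(S/I(G)) \le 1 + \sreg(S/I(G\setminus A))$, as desired. No serious obstacle arises in this argument; the key observation is purely structural, namely that restricting $G$ to the edges incident to the clique $A$ produces a graph whose complement partitions into a clique and an independent set. All other steps are direct applications of the tools already established in the section, so the proof is essentially a packaging of Lemma~\ref{KM-type}(a) and Theorem~\ref{cochordal1}.
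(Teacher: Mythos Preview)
Your proof is correct and follows essentially the same approach as the paper: split $I(G)=I(G_1)+I(G\setminus A)$ where $G_1$ collects the edges meeting $A$, observe that $G_1$ is a split graph (hence co-chordal), and apply Lemma~\ref{KM-type}(a) together with Theorem~\ref{cochordal1}. The only cosmetic point is that you argue $\overline{G_1}$ is split, whereas it is slightly more direct to note that $G_1$ itself is split (clique $A$, independent set $V(G)\setminus A$) and then invoke that split graphs are co-chordal; either route is fine.
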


\begin{corollary}
  \label{minimax-matching}
  If $G$ be a simple graph and $\beta(G)$ is the minimax matching number of $G$, then $\sreg(S/I(G))\le \beta(G)$.
\end{corollary}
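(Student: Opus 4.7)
The plan is to reduce Corollary \ref{minimax-matching} to Theorem \ref{cochord-thm} by showing that $\cochord(G)\le \beta(G)$, following the strategy of Woodroofe in \cite{arXiv:1009.2756}. So the key task is to cover the edges of $G$ by $\beta(G)$ co-chordal subgraphs.

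First, I would fix a maximal matching $M=\{e_1,\dots,e_b\}$ of minimum cardinality $b=\beta(G)$, and for each $e_i=\{u_i,v_i\}$ let $G_i$ be the subgraph of $G$ whose edges are all those incident to $u_i$ or to $v_i$. By the maximality of $M$, every edge of $G$ shares a vertex with some $e_j$, so it lies in the corresponding $G_j$; hence $E(G)=E(G_1)\cup\cdots\cup E(G_b)$.

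Next I would verify that each $G_i$ is co-chordal, so that $\cochord(G)\le b=\beta(G)$ and Theorem \ref{cochord-thm} delivers the desired bound. Since $G_i$ has a vertex cover of size two, namely $\{u_i,v_i\}$, the remaining vertices form an independent set in $G_i$ and therefore a clique $K$ in the complement $\overline{G_i}$. In $\overline{G_i}$ the vertex $u_i$ (resp.\ $v_i$) is adjacent only to those members of $K$ that are not neighbours of $u_i$ (resp.\ $v_i$) in $G_i$. An induced cycle in $\overline{G_i}$ of length at least four cannot lie entirely in $K$, so it must pass through $u_i$ or $v_i$; but any two $\overline{G_i}$-neighbours of $u_i$ (or of $v_i$) sit inside $K$ and are therefore joined by a chord, a contradiction. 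Hence $\overline{G_i}$ is chordal.

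The one genuinely non-trivial step is the co-chordality check, and that is really just the observation that a graph with a two-vertex cover is co-chordal. The remaining ingredients, maximality of $M$ to cover all edges and the application of Theorem \ref{cochord-thm}, are formal.
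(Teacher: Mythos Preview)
Your proof is correct and follows the same route that the paper indicates: it defers to Woodroofe's argument in \cite{arXiv:1009.2756}, which is exactly the covering-by-co-chordal-subgraphs construction you carry out, and then applies Theorem~\ref{cochord-thm}. The only cosmetic difference is that where you verify co-chordality of each $G_i$ by a direct chord-chasing argument, the paper (and Woodroofe) would observe that each $G_i$ is a \emph{split} graph---its vertex set partitions into the clique $\{u_i,v_i\}$ and the independent set $V(G)\setminus\{u_i,v_i\}$---and split graphs are automatically co-chordal; this is precisely the mechanism behind the corollary on partitioning $V(G)$ into an independent set and cliques, of which Corollary~\ref{minimax-matching} is then an immediate instance.
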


The proofs for the above three corollaries are similar to those for \cite[Theorems 2, 3 and 11]{arXiv:1009.2756} and we will not repeat here. 

\begin{remark}
It is not difficult to see that the invariant $\beta(G)$ in Corollary \ref{minimax-matching} is bounded above by $\cosize(I(\calC))$, thus the result we established in Theorem \ref{cochord-thm} is better in this situation.
\end{remark}

Here are some additional applications of Theorem \ref{cochord-thm}.
Recall that an \Index{interval graph} is an intersection graph of a family of intervals (closed, open or half-open) on the real line. Interval graphs are chordal. A \Index{co-interval graph} is the complement of an interval graph. The \Index{boxicity} of a graph $G$, denoted $\boxicity(G)$, is the cardinality of the smallest edge covering of $\overline{G}$ by co-interval spanning subgraphs (by an equivalent definition by \cite[Corollary 3.1]{MR0712922}).
Thus, $\cochord(G)\le \boxicity(\overline{G})$. As a corollary to Theorem \ref{cochord-thm}, we have

\begin{corollary}
If $G$ is a simple graph, then  $\sreg(R/I(G))\le \boxicity(\overline{G})$.
\end{corollary}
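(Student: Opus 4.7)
The plan is to deduce this corollary directly from Theorem \ref{cochord-thm} by showing that the co-chordal cover number $\cochord(G)$ is bounded above by $\boxicity(\overline{G})$. The link is the classical fact that every interval graph is chordal, and consequently every co-interval graph is co-chordal.

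First, I would unpack the definition of boxicity given in the excerpt: $\boxicity(\overline{G})$ equals the cardinality of the smallest edge covering of $\overline{\overline{G}} = G$ by co-interval spanning subgraphs. So by definition there exist co-interval subgraphs $H_1,\dots,H_b$ with $b=\boxicity(\overline{G})$ whose edges cover $E(G)$. Next, since each $H_i$ is co-interval, its complement (in the same vertex set) is an interval graph, which is chordal; hence $H_i$ itself is co-chordal. Therefore $\{H_1,\dots,H_b\}$ is a covering of $G$ by co-chordal subgraphs, which immediately yields
\[
\cochord(G) \le b = \boxicity(\overline{G}).
\]
Finally, Theorem \ref{cochord-thm} gives $\sreg(S/I(G)) \le \cochord(G)$, and combining these two inequalities delivers the desired bound.

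There is no real obstacle here; the only subtlety is that one must be careful with the two complementations appearing in the definition of boxicity (covering $\overline{G}$ by co-interval subgraphs when computing $\boxicity(G)$, versus covering $G$ when computing $\boxicity(\overline{G})$) and with the observation that ``co-interval'' implies ``co-chordal,'' which rests on the standard fact that interval graphs are chordal. Once these definitional points are laid out, the proof is a one-line consequence of Theorem \ref{cochord-thm}.
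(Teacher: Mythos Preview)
Your proposal is correct and follows essentially the same approach as the paper: the paper observes just before the corollary that $\cochord(G)\le \boxicity(\overline{G})$ (since co-interval graphs are co-chordal), and then the corollary is immediate from Theorem~\ref{cochord-thm}. Your write-up merely makes explicit the unfolding of the definitions and the complementation bookkeeping that the paper leaves implicit.
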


\begin{corollary}
  If $G$ is a co-planar graph, namely if $\overline{G}$ is planar, then $\sreg(S/I(G))\le 3$.
\end{corollary}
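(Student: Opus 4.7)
The plan is to chain together the preceding corollary with a classical result about the boxicity of planar graphs. Namely, the corollary just established says that $\sreg(S/I(G)) \le \boxicity(\overline{G})$ for any simple graph $G$. So the task reduces entirely to showing that a planar graph has boxicity at most $3$.

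That bound is not combinatorial folklore about clutters or edge ideals; it is a theorem of Thomassen (1986), which asserts that every planar graph $H$ can be covered by at most three co-interval spanning subgraphs, i.e.\ $\boxicity(H) \le 3$. Applying this with $H = \overline{G}$ gives $\boxicity(\overline{G}) \le 3$, and then the previous corollary finishes the proof.

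Thus the proof proposal is essentially a two-line citation: invoke Thomassen's theorem to obtain $\boxicity(\overline{G})\le 3$, then invoke the previous corollary to conclude $\sreg(S/I(G)) \le \boxicity(\overline{G}) \le 3$. There is no real obstacle here, since both ingredients are already in place; the only matter of care is to cite Thomassen's bound correctly (it is known to be sharp, as witnessed by suitable triangulations, so one cannot hope to replace $3$ by $2$ without further hypotheses on $G$).
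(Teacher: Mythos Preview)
Your proposal is correct and matches the paper's own proof exactly: the paper simply cites Thomassen's theorem that $\boxicity(\overline{G})\le 3$ for planar $\overline{G}$ and combines it with the preceding corollary $\sreg(S/I(G))\le \boxicity(\overline{G})$.
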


\begin{proof}
  It follows directly from the fact  that $\boxicity(\overline{G})\le 3$ (\cite{MR0830590}).
\end{proof}

\begin{remark}
  We are not sure whether the upper bound of the above inequality can be achieved as in \cite[Proposition 18]{arXiv:1009.2756}. Notice that  for the special case when the graph $G=3K_2$, it is clear that $\overline{G}$ is the 1-skeleton of the Octahedron, hence planar. However,  $\sreg(S/I(G))=2$ by Corollary \ref{mk2}.
\end{remark}

\begin{remark}
  When $G$ is a simple graph, an \Index{induced matching} in $G$ is a matching which forms an induced subgraph of $G$ and that $\indmatch(G)$ denotes the number of edges in a largest induced matching. We have $\reg(S/I(G))\ge \indmatch (G)$ by \cite[Lemma 2.2]{MR2209703}. Unfortunately, we don't have $\sreg(S/I(G))\ge \indmatch(G)$. For instance, $\sreg(S/I(2K_2))=1<\indmatch(2K_2)=2$.
\end{remark}

\subsection{Clutters}
In this subsection, we will consider an upper bound of Stanley regularity in terms of combinatorial data from general clutters.  To be more specific, let $\calC$ be a clutter. Then a \Index{2-collage} for $\calC$, as defined in \cite{arXiv:1301.6779}, is a subset $C$ of edges with the property that for each $e\in E(\calC)$ we can delete a vertex $v$ so that $e\setminus\Set{v}$ is contained in some edge of $C$. In particular, when $\calC$ is a uniform clutter, the condition for $C$ to be a 2-collage is equivalent to saying that for any edge $e$ not in $C$, there is an edge $f\in C$ such that the cardinality of the symmetric difference of $e$ and $f$ is 2.

\begin{lemma}
  \label{2collage}
  If  $\Set{e_1}$ is a 2-collage for the clutter $\calC$, then $\sreg(S/I(\calC))\le |e_1|-1$.
\end{lemma}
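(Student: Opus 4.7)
The plan is to dualise and then mimic the splitting argument from the proof of Theorem \ref{LM}, but in a considerably simpler form that exploits the 2-collage condition directly. By Lemma \ref{sdepth-sreg}, the claim $\sreg(S/I(\calC)) \le |e_1|-1$ is equivalent to
\[
\sdepth(I(\calC)^\vee) \ge n - |e_1| + 1,
\]
where $I(\calC)^\vee = \bigcap_{e \in E(\calC)} P_e$ with $P_e = \braket{x : x \in e}$. Write $r = |e_1|$ and, after relabelling the variables, assume that $P_{e_1} = \braket{x_1,\dots,x_r}$. Apply Construction \ref{splitting} with $P_1 = P_{e_1}$, so that $S' = \KK[x_1,\dots,x_r]$ and $S'' = \KK[x_{r+1},\dots,x_n]$, and for each edge $e$ put $P_e' = P_e\cap S'$ and $P_e'' = P_e\cap S''$.

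The key input is the 2-collage hypothesis: for every $e\in E(\calC)$ there is some $v\in e$ with $e\setminus\Set{v}\subseteq e_1$, hence $|e\setminus e_1|\le 1$. Consequently each $P_e''$ is either zero (in which case $e\subseteq e_1$, forcing $e = e_1$ by the clutter axiom) or a principal ideal generated by a single variable of $S''$. Now decompose $I(\calC)^\vee = \bigoplus_{\tau\subsetneq [s]}I_\tau$ with $I_\tau = H_\tau\otimes_\KK L_\tau$ as in Construction \ref{splitting}, and fix $\tau$ with $I_\tau\ne 0$. For such $\tau$, both $H_\tau$ and $L_\tau$ must be nonzero, which forces $e_1\notin\tau$ and $P_j''\ne 0$ for every $j\in\tau$. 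Then $L_\tau = \bigcap_{j\in\tau}P_j''$ is an intersection of principal monomial ideals (each generated by a single variable), and hence is itself principal, i.e.\ a free $S''$-module of rank one. By Lemma \ref{basic-facts}(a),
\[
\sdepth_{S''}(L_\tau) = n-r.
\]
Meanwhile $H_\tau$ is a nonzero squarefree monomial ideal in a polynomial subring of $S'$, so $\sdepth_{S'}(H_\tau)\ge 1$ by the same argument used in the proof of Theorem \ref{LM}. Combining these two contributions in the standard way gives
\[
\sdepth_S(I_\tau) \;\ge\; \sdepth_{S'}(H_\tau) + \sdepth_{S''}(L_\tau) \;\ge\; 1 + (n-r)
\]
for every nonzero $I_\tau$, and therefore $\sdepth(I(\calC)^\vee) \ge n - |e_1| + 1$, as desired.

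The main simplification compared with the proof of Theorem \ref{LM} is that no induction on the number of edges is required: the 2-collage hypothesis forces every surviving $L_\tau$ to be principal in one stroke, and the $H_\tau$-factor contributes Stanley depth at least $1$ for the trivial reason that it is a nonzero squarefree monomial ideal. Thus there is no genuine obstacle here beyond setting up the splitting with $e_1$ as the chosen prime.
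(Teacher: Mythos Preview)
Your proof is correct and follows essentially the same approach as the paper: dualise via Lemma~\ref{sdepth-sreg}, split with respect to $P_{e_1}$ using Construction~\ref{splitting}, use the $2$-collage hypothesis to see that each $P_e''$ is principal, and combine $\sdepth_{S'}(H_\tau)\ge 1$ with $\sdepth_{S''}(L_\tau)=n-r$. The only cosmetic difference is that the paper treats $\tau=\emptyset$ separately (where $I_\emptyset=(I^\vee\cap S')\otimes_\KK S''$), whereas your formulation absorbs this into the general case since $L_\emptyset=S''$ is itself free of rank one.
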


\begin{proof}
  It suffices to show that $\sdepth_S(I^\vee)\ge n-|e_1|+1$ where $I=I(\calC)$. Suppose the edge set is $E(\calC)=\Set{e_i:1\le i \le s}$. For each edge $e_i$ of $\calC$, there is a monomial prime ideal $P_i=\braket{x_j:x_j\in e_i}\subset S$. Now $I^\vee =\bigcap_{i=1}^s P_i$. Without loss of generality, we may assume that $\sum_{i=1}^s P_i=\braket{x_1,\dots,x_n}$ and $P_1=\braket{x_1,\dots,x_r}$ with $1\le r \le n-1$.

  We will use $Y=\Set{x_1,\dots,x_r}$ as the splitting set in Construction \ref{splitting} and define the rings $S'$ and $S''$ accordingly.
 Now, $I^\vee=\bigoplus_{\tau \subset [s]} I_\tau$ is a decomposition of $I^\vee$ as a direct sum of $\ZZ^n$-graded $\KK$-subspaces of $I^\vee$.

  When $\tau=\emptyset$, $\sdepth_S(I_\emptyset)=\sdepth_{S'}(I^\vee\cap S')+n-r\ge 1+n-r$ by Lemma \ref{basic-facts}.

  When $\tau\ne \emptyset$, we can write $I_\tau$ as $I_\tau=H_\tau\otimes_\KK L_\tau$ where
  \[
    H_\tau=\frac{\bigcap_{j\notin \tau}P_j'+\sum_{j\in\tau}P_j'}{\sum_{j\in\tau}P_j'}
    \]
    and $L_\tau=\bigcap_{j\in \tau}P_j''$. Since $e_1$ is a 2-collage for $\calC$, $P_1''=0$ and each $P_i''$ is principal for $2\le i \le s$. In particular, if $L_\tau$ is nonzero, then it is principal and $\sdepth_{S''}L_\tau=\dim(S'')=n-r$. Meanwhile, as in the proof of Theorem \ref{LM}, we know $\sdepth_{S'}(H_\tau)\ge 1$. Consequently, if $I_\tau\ne 0$, then $\sdepth_S(I_\tau)\ge \sdepth_{S'}(H_\tau)+\sdepth_{S''}(L_\tau)\ge 1+n-r$ by \cite[Lemma 1.2]{MR2777680}.

  Now $\sdepth_S(I^\vee)\ge \min\Set{\sdepth_S(I_\tau):\tau\subsetneq [s] \text{ and } I_\tau\ne 0}\ge 1+n-r=1+n-|e_1|$.
\end{proof}

\begin{theorem}
  \label{collages}
  If $\Set{e_1,\dots,e_s}$ is a $2$-collage in the clutter $\calC$, then 
  \[
  \sreg(S/I(\calC))\le \sum_{i=1}^s (|e_i|-1).
  \]
\end{theorem}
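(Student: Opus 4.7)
The plan is to reduce Theorem~\ref{collages} to the single-edge case handled by Lemma~\ref{2collage} via the Kalai--Meshulam-type sub-additivity of Lemma~\ref{KM-type}(a). The idea is to partition $E(\calC)$ into sub-clutters $\calC_1,\dots,\calC_s$, each admitting $\{e_i\}$ as a 2-collage of size one, then apply Lemma~\ref{2collage} to each summand and add.

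Concretely, for every edge $e\in E(\calC)$ the 2-collage property supplies some index $i(e)\in\{1,\dots,s\}$ and some vertex $v(e)\in e$ with $e\setminus\{v(e)\}\subseteq e_{i(e)}$. For each collage edge $e_j$ I would make the normalising choice $i(e_j)=j$, which is permissible because $e_j\setminus\{v\}\subseteq e_j$ for any $v\in e_j$. Next set $E_i=\{e\in E(\calC):i(e)=i\}$ and $\calC_i=(V(\calC),E_i)$. By construction $e_i\in E_i$ and $\{e_i\}$ is a 2-collage for $\calC_i$, so Lemma~\ref{2collage} yields
\[
\sreg(S/I(\calC_i))\le |e_i|-1
\]
for each $i$. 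Note that isolated vertices of $\calC_i$ cause no trouble, since by Remark~\ref{reduction} they are irrelevant for Stanley regularity.

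Since the sets $E_i$ partition $E(\calC)$, the corresponding edge ideals satisfy $I(\calC)=\sum_{i=1}^s I(\calC_i)$ inside $S$. Lemma~\ref{KM-type}(a) therefore gives
\[
\sreg(S/I(\calC))\;=\;\sreg\!\left(S\Big/\sum_{i=1}^s I(\calC_i)\right)\;\le\;\sum_{i=1}^s \sreg(S/I(\calC_i))\;\le\;\sum_{i=1}^s(|e_i|-1),
\]
which is the desired bound. There is no real obstacle in this argument: the substantial work has already been invested in Lemma~\ref{2collage}, whose splitting construction established the single-edge collage bound, and Theorem~\ref{collages} is essentially a bookkeeping combination of that bound with the sub-additivity of Stanley regularity along sums of squarefree monomial ideals. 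The only care required is to choose the assignment $e\mapsto i(e)$ so that the collage elements themselves are assigned to their own parts, which guarantees that each $\calC_i$ truly has $\{e_i\}$ as a 2-collage.
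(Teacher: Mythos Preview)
Your proof is correct and follows essentially the same approach as the paper: decompose $E(\calC)$ into sub-clutters each admitting a single-edge 2-collage, apply Lemma~\ref{2collage} to each piece, and combine via Lemma~\ref{KM-type}(a). The only cosmetic difference is that the paper takes the overlapping cover $H_i=\{e\in E(\calC):e\setminus\{v\}\subseteq e_i\text{ for some }v\}$ rather than your disjoint partition $E_i$, but since $I(\calC)=\sum_i I(H_i)=\sum_i I(\calC_i)$ either way, the argument is the same.
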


\begin{proof}
  As in the proof for \cite[Theorem 1.2]{arXiv:1301.6779}, for each edge $e_i$ in the assumption, we set $H_i$ to be the clutter consisting of all edges $e$ with $e\setminus \Set{v}\subset e_i$ for some vertex $v$. Now $E(\calC)=\bigcup_{i=1}^s E(H_i)$ and each $H_i$ satisfies the condition of Lemma \ref{2collage}.  Now, we apply Lemma \ref{KM-type}(a).
\end{proof}

% \bib, bibdiv, biblist are defined by the amsrefs package.
\begin{bibdiv}
\begin{biblist}

\bib{MR1251956}{book}{
      author={Bruns, Winfried},
      author={Herzog, J{\"u}rgen},
       title={Cohen-{M}acaulay rings},
      series={Cambridge Studies in Advanced Mathematics},
   publisher={Cambridge University Press},
     address={Cambridge},
        date={1993},
      volume={39},
        ISBN={0-521-41068-1},
      review={\MR{1251956 (95h:13020)}},
}

\bib{bruns-2009}{article}{
      author={Bruns, Winfried},
      author={Krattenthaler, Christian},
      author={Uliczka, Jan},
       title={Stanley decompositions and {H}ilbert depth in the {K}oszul
  complex},
        date={2010},
        ISSN={1939-0807},
     journal={J. Commut. Algebra},
       pages={327\ndash 357},
      review={\MR{2728147 (2011j:13027)}},
}

\bib{arXiv.org:0808.2657}{article}{
      author={Cimpoea{\c{s}}, Mircea},
       title={Some remarks on the {S}tanley depth for multigraded modules},
        date={2008},
        ISSN={0373-3505},
     journal={Matematiche (Catania)},
      volume={63},
       pages={165\ndash 171 (2009)},
      review={\MR{2531658 (2010d:13032)}},
}

\bib{arXiv:1107.3359}{article}{
      author={Cimpoea{\c{s}}, Mircea},
       title={Several inequalities regarding {S}tanley depth},
        date={2012},
        ISSN={2247-689X},
     journal={Rom. J. Math. Comput. Sci.},
      volume={2},
       pages={28\ndash 40},
      review={\MR{3046634}},
}

\bib{MR0712922}{article}{
      author={Cozzens, Margaret~B.},
      author={Roberts, Fred~S.},
       title={Computing the boxicity of a graph by covering its complement by
  cointerval graphs},
        date={1983},
        ISSN={0166-218X},
     journal={Discrete Appl. Math.},
      volume={6},
       pages={217\ndash 228},
         url={http://dx.doi.org/10.1016/0166-218X(83)90077-X},
      review={\MR{712922 (85d:05142)}},
}

\bib{arXiv:1301.2665}{article}{
      author={Dao, Hailong},
      author={Schweig, Jay},
       title={Bounding the projective dimension of a square-free monomial ideal
  via domination in clutters},
        date={2013},
      eprint={arXiv:1301.2665},
}

\bib{MR0130190}{article}{
      author={Dirac, G.~A.},
       title={On rigid circuit graphs},
        date={1961},
        ISSN={0025-5858},
     journal={Abh. Math. Sem. Univ. Hamburg},
      volume={25},
       pages={71\ndash 76},
      review={\MR{0130190 (24 \#A57)}},
}

\bib{arXiv:1310.7912}{article}{
      author={H{\`a}, Huy~T{\`a}i},
       title={Regularity of squarefree monomial ideals},
        date={2013},
      eprint={arXiv:1310.7912},
}

\bib{arXiv:1301.6779}{article}{
      author={H{\`a}, Huy~T{\`a}i},
      author={Woodroofe, Russ},
       title={Results on the regularity of square-free monomial ideals},
        date={2013},
      eprint={arXiv:1301.6779},
}

\bib{MR2317642}{article}{
      author={Herzog, J{\"u}rgen},
       title={A generalization of the {T}aylor complex construction},
        date={2007},
        ISSN={0092-7872},
     journal={Comm. Algebra},
      volume={35},
       pages={1747\ndash 1756},
}

\bib{herzog2013survey}{incollection}{
      author={Herzog, J{\"u}rgen},
       title={A survey on {S}tanley depth},
        date={2013},
   booktitle={{Monomial Ideals, Computations and Applications}},
      series={Lecture Notes in Math.},
      volume={2083},
   publisher={Springer},
       pages={3\ndash 45},
}

\bib{MR2724673}{book}{
      author={Herzog, J{\"u}rgen},
      author={Hibi, Takayuki},
       title={Monomial ideals},
      series={Graduate Texts in Mathematics},
   publisher={Springer-Verlag London Ltd.},
     address={London},
        date={2011},
      volume={260},
        ISBN={978-0-85729-105-9},
      review={\MR{2724673}},
}

\bib{arXiv:1011.6462}{article}{
      author={Herzog, J{\"u}rgen},
      author={Popescu, Dorin},
      author={Vladoiu, Marius},
       title={Stanley depth and size of a monomial ideal},
        date={2012},
        ISSN={0002-9939},
     journal={Proc. Amer. Math. Soc.},
      volume={140},
       pages={493\ndash 504},
      review={\MR{2846317 (2012h:13024)}},
}

\bib{arxiv.0712.2308}{article}{
      author={Herzog, J{\"u}rgen},
      author={Vladoiu, Marius},
      author={Zheng, Xinxian},
       title={How to compute the {S}tanley depth of a monomial ideal},
        date={2009},
        ISSN={0021-8693},
     journal={J. Algebra},
      volume={322},
       pages={3151\ndash 3169},
      review={\MR{MR2567414}},
}

\bib{arXiv:1104.2412}{article}{
      author={Ishaq, Muhammad},
      author={Qureshi, Muhammad~Imran},
       title={Upper and lower bounds for the {S}tanley depth of certain classes
  of monomial ideals and their residue class rings},
        date={2013},
        ISSN={0092-7872},
     journal={Comm. Algebra},
      volume={41},
       pages={1107\ndash 1116},
      review={\MR{3037183}},
}

\bib{MR2259083}{article}{
      author={Kalai, Gil},
      author={Meshulam, Roy},
       title={Intersections of {L}eray complexes and regularity of monomial
  ideals},
        date={2006},
        ISSN={0097-3165},
     journal={J. Combin. Theory Ser. A},
      volume={113},
       pages={1586\ndash 1592},
      review={\MR{2259083 (2007e:13033)}},
}

\bib{MR2209703}{article}{
      author={Katzman, Mordechai},
       title={Characteristic-independence of {B}etti numbers of graph ideals},
        date={2006},
        ISSN={0097-3165},
     journal={J. Combin. Theory Ser. A},
      volume={113},
      number={3},
       pages={435\ndash 454},
         url={http://dx.doi.org/10.1016/j.jcta.2005.04.005},
      review={\MR{2209703 (2007f:13032)}},
}

\bib{arXiv:1211.4301}{article}{
      author={Lin, Kuei-Nuan},
      author={McCullough, Jason},
       title={Hypergraphs and the regularity of square-free monomial ideals},
        date={2012},
      eprint={arXiv:1211.4301},
}

\bib{MR0921965}{article}{
      author={Lyubeznik, Gennady},
       title={On the arithmetical rank of monomial ideals},
        date={1988},
        ISSN={0021-8693},
     journal={J. Algebra},
      volume={112},
       pages={86\ndash 89},
      review={\MR{921965 (89b:13020)}},
}

\bib{MR2943752}{article}{
      author={Morales, Marcel},
      author={Yazdan~Pour, Ali~Akbar},
      author={Zaare-Nahandi, Rashid},
       title={The regularity of edge ideals of graphs},
        date={2012},
        ISSN={0022-4049},
     journal={J. Pure Appl. Algebra},
      volume={216},
       pages={2714\ndash 2719},
         url={http://dx.doi.org/10.1016/j.jpaa.2012.03.029},
      review={\MR{2943752}},
}

\bib{MR2777680}{article}{
      author={Popescu, Adrian},
       title={Special {S}tanley decompositions},
        date={2010},
        ISSN={1220-3874},
     journal={Bull. Math. Soc. Sci. Math. Roumanie (N.S.)},
      volume={53(101)},
       pages={363\ndash 372},
      review={\MR{2777680 (2012a:13037)}},
}

\bib{MR1833633}{article}{
      author={R{\"o}mer, Tim},
       title={Generalized {A}lexander duality and applications},
        date={2001},
        ISSN={0030-6126},
     journal={Osaka J. Math.},
      volume={38},
       pages={469\ndash 485},
         url={http://projecteuclid.org/getRecord?id=euclid.ojm/1153492477},
      review={\MR{1833633 (2002c:13029)}},
}

\bib{MR2563149}{article}{
      author={Soleyman~Jahan, Ali},
       title={Prime filtrations and {S}tanley decompositions of squarefree
  modules and {A}lexander duality},
        date={2009},
        ISSN={0025-2611},
     journal={Manuscripta Math.},
      volume={130},
       pages={533\ndash 550},
      review={\MR{2563149 (2010j:13042)}},
}

\bib{MR666158}{article}{
      author={Stanley, Richard~P.},
       title={Linear {D}iophantine equations and local cohomology},
        date={1982},
        ISSN={0020-9910},
     journal={Invent. Math.},
      volume={68},
       pages={175\ndash 193},
      review={\MR{666158 (83m:10017)}},
}

\bib{MR1715588}{article}{
      author={Terai, Naoki},
       title={Alexander duality theorem and {S}tanley-{R}eisner rings},
        date={1999},
     journal={S\=urikaisekikenky\=usho K\=oky\=uroku},
       pages={174\ndash 184},
        note={Free resolutions of coordinate rings of projective varieties and
  related topics (Japanese) (Kyoto, 1998)},
}

\bib{MR0830590}{article}{
      author={Thomassen, Carsten},
       title={Interval representations of planar graphs},
        date={1986},
        ISSN={0095-8956},
     journal={J. Combin. Theory Ser. B},
      volume={40},
       pages={9\ndash 20},
         url={http://dx.doi.org/10.1016/0095-8956(86)90061-4},
      review={\MR{830590 (87d:05071)}},
}

\bib{arXiv:1009.2756}{article}{
      author={Woodroofe, Russ},
       title={Matchings, coverings, and {C}astelnuovo-{M}umford regularity},
        date={2010},
      eprint={arXiv:1009.2756},
}

\bib{MR1741555}{article}{
      author={Yanagawa, Kohji},
       title={Alexander duality for {S}tanley-{R}eisner rings and squarefree
  {$\bold N\sp n$}-graded modules},
        date={2000},
        ISSN={0021-8693},
     journal={J. Algebra},
      volume={225},
       pages={630\ndash 645},
}

\end{biblist}
\end{bibdiv}
\end{document}